\pgfplotsset{compat=1.18}
\newtheorem{theorem}{Theorem}
\newtheorem{coro}{Corollary}
\newtheorem{lemma}{Lemma}
\newtheorem{definition}
{Definition}
\newtheorem{remark}{Remark}
\newcommand{\dmls}{p^*_\xi}
\newcommand{\mlsfun}[1]{s^\text{MLS}_{f,X}(#1)}
\newcommand{\mlsfunstoch}{s^\text{MLS}_{F_n,X_n}}
\title{Moving Least Squares without Quasi-Uniformity: A Stochastic Approach
}
\author{
Shir Tapiro Moshe\thanks{Department of Statistics and Data Science, The Hebrew University of Jerusalem, Mount Scopus, Jerusalem 91905, Israel. Email: \texttt{shir.tapiro@mail.huji.ac.il}. \textbf{Corresponding author.}}
\and
Yariv Aizenbud\thanks{Department of Applied Mathematics, School of Mathematical Sciences, Tel Aviv University, Tel Aviv 69978, Israel. Email: \texttt{aizeny@tauex.tau.ac.il}}
\and
Barak Sober\thanks{Department of Statistics and Data Science, Center for Digital Humanities, The Hebrew University of Jerusalem, Mount Scopus, Jerusalem 91905, Israel. Email: \texttt{barak.sober@mail.huji.ac.il}}
}
\begin{document}
\date{}
\maketitle

\begin{abstract}
Local Polynomial Regression (LPR) and Moving Least Squares (MLS) are closely related nonparametric estimation methods, developed independently in statistics and approximation theory. While statistical LPR analysis focuses on overcoming sampling noise under probabilistic assumptions, the deterministic MLS theory studies smoothness properties and convergence rates with respect to the \textit{fill distance} (a resolution parameter). Despite this similarity, the deterministic assumptions underlying MLS fail to hold under random sampling.
We begin by quantifying the probabilistic behavior of the fill distance $h_n$ and \textit{separation} $\delta_n$ of an i.i.d. random sample. 
That is, for a distribution satisfying a mild regularity condition, $h_n\propto n^{-1/d}\log^{1/d} (n)$ and $\delta_n \propto n^{-2/d}$ in probability.
We then prove that, for MLS of degree $k\!-\!1$, the approximation error associated with a differential operator $Q$ of order $m\le k-1$ decays as $h_n^{\,k-m}$, establishing stochastic analogues of the classical MLS estimates.
Additionally, we show that the MLS approximant is locally smooth with high probability.
This work provides the first unified stochastic analysis of MLS, demonstrating that---despite the failure of deterministic sampling assumptions---the classical convergence and smoothness properties persist under natural probabilistic models.
\end{abstract}

\medskip
\noindent\textbf{Keywords.}\ Moving least squares; local polynomial regression;
nonparametric regression; scattered data approximation; fill distance;
separation distance; quasi-uniformity; random sampling; convergence rates;
derivative approximation.

\medskip
\noindent\textbf{Mathematics Subject Classification (2020).}\ 41A25; 62G08
\textup{(Primary)}; 41A10; 62G20; 65D15; 60D05 \textup{(Secondary)}.

\section{Introduction}

Local Polynomial Regression (LPR) and Moving Least Squares (MLS) are two closely related nonparametric methods, both relying on weighted local polynomial fitting. While they emerged independently—LPR from statistics and MLS from numerical analysis—their local formulations are algebraically equivalent. This parallel development has led to two complementary traditions: one focused on statistical inference under random sampling, the other on numerical approximation under deterministic sampling.

In statistics, LPR was introduced as a framework for estimating functions and their derivatives under random designs. The method constructs a local polynomial approximation around a target point, with nearby data weighted more heavily than distant ones. Traditional developments in this field—ranging from foundational works by Stone \cite{stone1982optimal} and Tsybakov \cite{tsybakov2008introduction} to refined asymptotic analyses by Ruppert and Wand \cite{ruppert1994multivariate}—primarily focus on handling observational noise and analyzing the resulting estimators for point or interval estimation. Crucially, these classical statistical analyses evaluate the derivatives of the local fitted polynomial at a single, fixed target point. Furthermore, because their primary objective is managing measurement noise, the underlying geometric stability of the random point configurations is often secondary or handled via simplifying distributional assumptions.

In parallel, MLS was developed in computational mathematics as a meshfree method for function approximation \cite{levin1998approximation}. Like LPR, it employs weighted polynomial fitting, but its construction yields smooth shape functions with local support that enable continuous reconstruction across a domain. This property makes MLS especially suitable for geometric applications, such as surface reconstruction \cite{levin2004mesh}, solving partial differential equations \cite{mirzaei2015analysis}, and manifold approximation \cite{sober2020manifold, sharon2023multiscale}. The deterministic analyses of MLS rely on geometric sampling conditions, notably the notions of \textit{fill distance} (sometimes referred to as the \textit{covering radius}), the largest distance from any point of the domain to its nearest sample, and \textit{separation}, the minimal distance between two distinct points. In this setting, the two quantities are comparable up to a constant factor, which provides a convenient framework for proving convergence and stability. Within this setting, Levin \cite{levin1998approximation} demonstrated convergence rates for smooth functions, while Mirzaei \cite{mirzaei2015analysis} extended the theory to derivatives of functions. In particular, Mirzaei showed that if one applies a differential operator of order $m$ to both the MLS approximation and the true function, the difference between them decays like the fill distance raised to the power $k-|m|$, where $k$ is the smoothness order of the function. These results underscore the strength of MLS in preserving smoothness and geometric structure under deterministic sampling assumptions.
More recently, Krieg and Sonnleitner \cite{krieg2024random} relaxed the reliance on fill distance by showing that random points on convex domains are asymptotically optimal for approximating functions in Sobolev spaces. This suggests that the large gaps characteristic of stochastic sampling do not prevent optimal average convergence rates.

Although LPR and MLS share an algebraic equivalence in their local formulations, they diverge significantly in their analytical objectives and geometric assumptions. In this work, we operate in a purely noiseless approximation framework—where observational noise is completely absent—and adopt the stochastic language of random designs solely to model the random spatial layout of the sample points. In this stochastic setting, a fundamental mathematical obstacle arises: the deterministic assumptions underlying classical MLS theory completely collapse. Under random sampling, the fill distance and the separation distance are no longer comparable; instead, random points form dense local clusters that threaten the algebraic stability and invertibility of the local matrices.

The main contribution of this work is to provide a rigorous stochastic analysis that overcomes this geometric stability bottleneck. Unlike the classical LPR literature, which evaluates the point-wise derivatives of a separate local polynomial fit at each target point, our work establishes convergence guarantees for the derivatives of the MLS approximation function itself—a single function defined across the domain rather than a pointwise local fit. This functional behavior is a critical prerequisite for practical applications, such as using meshless methods to solve partial differential equations (PDEs) \cite{mirzaei2015analysis}. By showing that the local design matrices remain stable despite the breakdown of quasi-uniformity, we successfully extend the deterministic derivative approximation results of Mirzaei \cite{mirzaei2015analysis} to purely random sampling configurations.

\section{Preliminaries}

We first recall some standard geometric concepts from the deterministic MLS framework.
\begin{definition}
        Let $\Omega$ be a domain in $\mathbb{R}^d$ , and consider sets of data points in $\Omega$. We say that $X=\left\{x_i\right\}_{i=1}^I$ is a $h-\delta$ set if:
        \begin{enumerate}
            \item $h$ is the "fill distance" with respect to the domain $\Omega$, 
            \begin{equation*}
                h=\sup _{x \in \Omega} \min _{x_i \in X}\left\|x-x_i\right\|,
            \end{equation*}
            \item The "separation" is the largest $\delta$ such that 
            \begin{equation*}
                \left\|x_i-x_j\right\| \geq  \delta, \quad \forall \, 1 \leq i<j \leq I.
            \end{equation*}
        \end{enumerate}
\end{definition}
The next definition describes a condition on the sample points that ensures they are neither too close together nor too far apart, providing a foundation for all subsequent deterministic MLS results.
\begin{definition}
        (quasi-uniform sample set). A set of data sites $X=\left\{x_1, \ldots, x_n\right\}$ is said to be quasi-uniform with respect to a domain $\Omega$ and a constant $c_{\mathrm{qu}} > 0$ if
        $$\delta \leq h \leq c_{\mathrm{qu}} \delta $$
        where $h$ is the fill distance and $\delta$ is the separation.
        \label{h-delta-condition -levin}
\end{definition}  

While this definition serves as the foundation for the subsequent results in the deterministic setting, it does not hold in the stochastic setup, which required us adapt the proofs to the stochastic conditions, as will be seen later.\\
We now recall some central results from the deterministic MLS framework, focusing on the construction of the local polynomial approximation and its convergence properties.

Let $\theta_h \in \mathcal{C}^k$ (continuously differentiable $k$ times over $\Omega$) be a weight function with compact support of size $s\cdot h$ of the shape
\begin{equation}
 \theta_h(x)=\Phi\left(\frac{x}{h}\right),
 \label{theta_weight func}
\end{equation}
where $\Phi: \mathbb{R}^d \rightarrow \mathbb{R}$ is a nonnegative function supported in the ball $B(0,s)$ (the ball centered at the origin with radius $s$), where $\Phi(x)\geq 0$ for all $x\in B(0,s)$.

Let $\left\{x_i\right\}_{i=1}^n $ be a set of distinct scattered points in $\mathbb{R}^d$, and $\left\{f\left(x_i\right)\right\}_{i=1}^n$ be the corresponding sampled values of some function $f: \mathbb{R}^d \rightarrow \mathbb{R}$. Then, we define the local polynomial by $\dmls$, 
      \begin{equation}
      \dmls=\underset{p \in \Pi_{k-1}^d}{\operatorname{argmin}} \sum_{i=1}^n\left(p\left(x_i\right)-f\left(x_i\right)\right)^2 \theta_h\left(\xi-x_i\right),
          \label{dmls}
      \end{equation} 
where $\Pi_{k-1}^d $ is the space of polynomials of total degree $k-1$ in $\mathbb{R}^d$. The MLS function is defined by 
 $$ \mlsfun{x}:=p^*_x(0).$$
Based on the above, Levin \cite{levin1998approximation} showed that if $f\in \mathcal{C}^k(\Omega)$ then      $$\|\mlsfun{x}-f\|_{\Omega, \infty}<L \cdot h^{k},$$
where $\| \mlsfun x -f\|_{\Omega, \infty}=\sup_{x \in \Omega}| \mlsfun x-f(x)|$, and $L$ is a constant.
Another significant theoretical result was established in Mirzaei's work \cite{mirzaei2015analysis}, which demonstrated that for some constant $C$ independent of $f$ and $h$, if $f$ is $k$-times differentiable, then  
$$
\left|\partial^\alpha \mlsfun{\hat{x}}-\partial^\alpha f(\hat{x})\right|\leq C|f|_{\mathcal{C}^k(\hat{x})} h^{k-|\alpha|},
$$
where $\alpha=\left(\alpha_1, \ldots, \alpha_d\right) $ is a multi-index, meaning an ordered tuple of nonnegative integers used to denote partial derivatives:
$ \partial^\alpha f=\frac{\partial^{|\alpha|} f}{\partial x_1^{\alpha_1} \ldots \partial x_d^{\alpha_d}} \text {, where }|\alpha|=\alpha_1+\cdots+\alpha_d$, and the seminorm, $|f|_{\mathcal{C}^k\left(B\left(\hat{x},s h\right)\right)} $, is defined by
$$
|f|_{\mathcal{C}^k\left(B\left(\hat{x},s h\right)\right)} \triangleq \max _{\substack{|\alpha|=k \\ x \in B\left(\hat{x}, s h\right)}}\left|\partial^\alpha f(x)\right|
,$$
More generally, let \( Q \) be a linear differential operator with constant coefficients, given by  
$$Q=\sum_{|\alpha| \leq k} q_\alpha \partial^\alpha,$$
where \( q_\alpha \) are real constants for \( |\alpha| \leq k \). Here, $\alpha=\left(\alpha_1, \ldots, \alpha_d\right)$ denotes a multi-index of nonnegative integers. Its order is $|\alpha|=\alpha_1+\cdots+\alpha_d$, and for a smooth function $f$ we write $\partial^\alpha f=\frac{\partial^{|\alpha|} f}{\partial x_1^{\alpha_1} \cdots \partial x_d^{\alpha_d}}
$.
Let \( m \) denote the order of \( Q \), defined by  
$$m=\max \left(|\alpha|:|\alpha| \leq k-1 \text { and } q_\alpha \neq 0\right).$$
(For example, if \( Q f = f \), then \( m=0 \); while if \( Q f = \partial^2 f / \partial x_1^2+\cdots+\partial^2 f / \partial x_1\partial x_2 \), then \( m=2 \)). 

With these definitions and results in place, we are now ready to extend the MLS framework from the deterministic setting to stochastic samples, which will be the focus of the next section.

\section{Main Results}

\begin{theorem}
    For a distribution $\mathbb{P}$ over $\Omega$ that satisfies Definition \ref{NicelyDef}, let $X_n=\{x_1,x_2,...,x_n\}$ be a set of $n$ $iid$ samples of $\mathbb{P}$. Denote by $h_n$ the fill distance of $X_n$ (with respect to $\Omega$), and by $\delta_n$ the separation of $X_n$. 
    Then, 
         \begin{equation}
            h_n = O_P(n^{-\frac{1}{d}} \ln ^{\frac{1}{d}}(n))\quad \mbox{and} \quad \delta_n = \Omega_P(n^{-\frac{2}{d}}).
         \end{equation}
         \label{theorem of order of probability}
\end{theorem}
\begin{proof}
    The proof follows immediately from Lemma \ref{Order in probability of h_n} and Lemma \ref{Order in probability of delta_n} below.
\end{proof}

As a result, we needed to adapt the proofs of Mirzaei \cite{mirzaei2015analysis} so that they hold under the stochastic conditions of Theorem \ref{theorem of order of probability}. Ultimately, we are able to establish the following theorem, which serves as the stochastic analogue of Mirzaei’s result.

\begin{theorem}
Let $X_n$ be a sample set of size $n$ drawn from a nicely behaving distribution \emph{(}Definition \ref{NicelyDef}\emph{)} on $\Omega$, which satisfies the interior cone condition \emph{(}Definition \ref{cone_cond}\emph{)}. 
Let $F_n$ be the corresponding sampled values of some function $f:\mathbb{R}^d\rightarrow\mathbb{R}$, and let $\mlsfunstoch(x)$ be the value of the $(k-1)$ degree MLS function at the point $x$, then   for $\hat{x}\in \Omega$ 

\begin{equation}
P\!\left(\left|Q\mlsfunstoch(\hat{x})-Qf(\hat{x})\right|
\ge \mathcal{K}\,h_n^{k-m}\right)\leq \frac{C_{M_Q}}{n^{c_\mu}}
\label{mainResultEqProb}
\end{equation}
i.e.,
\begin{equation}
\lim_{n\rightarrow\infty}P\left(\left|Q \mlsfunstoch(\hat{x})-Qf(\hat{x})\right| \geq\mathcal{K}\cdot h_n^{k-m}\right)=0,
\end{equation}
where $\mathcal{K},C_{M_Q},{c_\mu}$ are some positive constants independent of $n$, $Q$ is a linear differential operator of degree $m< k$.    
    \label{main_result}
\end{theorem}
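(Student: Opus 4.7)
My overall plan is to mirror Mirzaei's deterministic argument \cite{mirzaei2015analysis} while replacing every appeal to quasi-uniformity by a probabilistic analogue that exploits Theorem \ref{theorem of order of probability} together with matrix-concentration inequalities. Writing $\mlsfunstoch(\hat x)=p^*_{\hat x}(0)$ with $p^*_{\hat x}\in\Pi_{k-1}^d$ the minimizer of \eqref{dmls}, I would first note that the normal equations for the coefficient vector $\beta^*$ of $p^*_{\hat x}$ read $A_n(\hat x)\beta^*=b_n(\hat x)$, where
\[
A_n(\hat x)=\sum_{i=1}^n \theta_{h_n}(\hat x-x_i)\,\phi(x_i)\phi(x_i)^\top,\qquad b_n(\hat x)=\sum_{i=1}^n \theta_{h_n}(\hat x-x_i)\,f(x_i)\,\phi(x_i),
\]
and $\phi$ is the polynomial basis rescaled by $h_n$ around $\hat x$. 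Since $Q\mlsfunstoch(\hat x)$ is a fixed linear functional $q^\top\beta^*$, it can be rewritten as $\sum_i a_i(\hat x)f(x_i)$ with dual weights $a_i(\hat x)=q^\top A_n(\hat x)^{-1}\phi(x_i)\theta_{h_n}(\hat x-x_i)$. Subtracting the identity $Qf(\hat x)=\sum_i a_i(\hat x)T_{k-1}f(x_i;\hat x)$, which follows from polynomial reproduction whenever $|m|\le k-1$, yields the key error representation
\[
Q\mlsfunstoch(\hat x)-Qf(\hat x)=\sum_{i=1}^n a_i(\hat x)\,\bigl(f(x_i)-T_{k-1}f(x_i;\hat x)\bigr),
\]
so the whole problem reduces to (i) bounding the dual weights $a_i(\hat x)$ and (ii) bounding the Taylor remainder on $\mathrm{supp}\,\theta_{h_n}(\hat x-\cdot)$.

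The central step is to show that $A_n(\hat x)$ is well-conditioned with high probability. After the rescaling $y=(x-\hat x)/h_n$, the normalized moment matrix $\widetilde A_n:=(nh_n^d)^{-1}A_n(\hat x)$ is an average of $n$ i.i.d.\ bounded random matrices whose expectation, by the nice-behavior assumption on $\mathbb{P}$, lies within a constant factor of the deterministic moment matrix of $\Phi$ on $B(0,s)$, which is uniformly positive definite. I would apply a matrix Bernstein inequality (Tropp) to obtain $\|\widetilde A_n-\mathbb{E}[\widetilde A_n]\|\le\varepsilon$ with probability at least $1-2/n^{c'\varepsilon^2}$, provided $nh_n^d\gtrsim\log n$; the latter is exactly what Theorem \ref{theorem of order of probability} secures on the event $\{h_n\le C_h n^{-1/d}\log^{1/d}n\}$, whose complement supplies the $2/n^c$ term. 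Inverting gives $\|A_n(\hat x)^{-1}\|\le C/(nh_n^d)$, from which the elementary estimate $|a_i(\hat x)|\le C'/(nh_n^{d+|m|})$ follows—the extra $h_n^{-|m|}$ factor coming from applying $Q$ to a polynomial in the rescaled variable—while the interior cone condition keeps the number of nonzero $a_i(\hat x)$ bounded by $C''\cdot nh_n^d$. Hence $\sum_i|a_i(\hat x)|\le C''' h_n^{-|m|}$ on the good event, the stochastic analogue of Mirzaei's dual-weight bound.

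For the Taylor part I would simply invoke $|f(x_i)-T_{k-1}f(x_i;\hat x)|\le C|f|_{\mathcal{C}^k}h_n^k$ on $\mathrm{supp}\,\theta_{h_n}(\hat x-\cdot)$, which combined with the dual-weight bound yields $|Q\mlsfunstoch(\hat x)-Qf(\hat x)|\le C_* h_n^{k-|m|}$ deterministically on the good event. The extra $\log n$ factor in \eqref{mainResultEqProb} is there to absorb the $\log^{1/d}(n)$ inflation of $h_n^{k-|m|}$ once $h_n$ is replaced by its high-probability upper bound, together with the slack accumulated in the concentration steps. The third term $2/n^{c_\mathcal{Q}\varepsilon^2}$ I would obtain from an additional vector-valued concentration bound showing that $b_n(\hat x)-\mathbb{E}[b_n(\hat x)]$ concentrates at the scale demanded by the coefficients of $Q$: a Hoeffding-type tail applied coordinate-wise to each monomial appearing in $Q$ yields this factor, with $c_\mathcal{Q}$ absorbing the dependence on the coefficients $q_\alpha$. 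A union bound over the three high-probability events produces \eqref{mainResultEqProb}.

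The hardest part will be the conditioning step. In the quasi-uniform theory, comparability of $\delta$ and $h$ makes $A_n$ trivially well-conditioned; under i.i.d.\ sampling, $\delta_n\sim n^{-2/d}$ is far smaller than $h_n$, so conditioning must be recovered purely from the roughly $nh_n^d\sim\log n$ points falling inside the support of $\theta_{h_n}(\hat x-\cdot)$. Threading the matrix-Bernstein bound carefully through the $\varepsilon$-dependence—keeping the failure probability of order $n^{-c'\varepsilon^2}$ while ensuring the resulting bound in \eqref{mainResultEqProb} still vanishes as $n\to\infty$—is the delicate balancing act; once it is secured, the remainder is a careful but essentially algebraic adaptation of Mirzaei's computation.
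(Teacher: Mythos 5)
Your proposal follows essentially the same route as the paper's proof: the same shape-function/dual-weight error representation against the Taylor polynomial of $f$ at $\hat{x}$, matrix-Bernstein conditioning of the local moment matrix on the event that $B(\hat{x},sh_n)$ contains $\Theta(\log n)$ points, a Hoeffding-type bound for the remaining eigenvalue/vector concentration, and a union bound over the three failure events — the only organizational difference being that the paper proves the per-multi-index bound first (Lemma \ref{main_result_assisting}) and union-bounds over the finitely many $\alpha$ with $q_\alpha\neq 0$, whereas you fold $Q$ into the dual weights directly, and the paper books the extra $\log n$ through the shape-function sum $\sum_i|\partial^\alpha a_i^*|\lesssim \log(n)\,h_n^{-|\alpha|}$ rather than through the inflation of $h_n$. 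The one slip worth correcting is your claim that the event $\{h_n\le C_h n^{-1/d}\log^{1/d}n\}$ "secures" $nh_n^d\gtrsim\log n$: that event yields only the upper bound $nh_n^d\lesssim\log n$, and the lower bound on the local sample count that actually drives the $n^{-c\varepsilon^2}$ exponents must come from a separate two-sided count of the points in the window, as in the paper's Lemma \ref{convergence rate of sample size growth}.
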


See proof in Section \ref{Main_Result_proof}.

We also show that , the MLS approximation is smooth, similarly to the deterministic scenario shown in \cite{levin1998approximation}:

\begin{theorem}

[Local smoothness of the MLS approximation]\label{thm:local-smoothness}
Given the event $\mathscr{E}_I$ hold \eqref{E_I event} and $\theta_h \in \mathcal{C}^k(\Omega)$. For every fixed $x_0 \in \Omega$ there exists a
radius $\delta_0 \asymp h_n$ such that the MLS approximation
$s^{\mathrm{MLS}}_{F_n,X_n}(x)$ is $k$-times smooth on the ball
$B(x_0,\delta_0)$, with probability at least $1 - \tfrac{2A^2}{n^{\phi}}$, where
$\phi$ is a positive constant. Namely,
\[
  P\!\left( s^{\mathrm{MLS}}_{F_n,X_n}(x) \in \mathcal{C}^k\!\big(B(x_0,\delta_0)\big)\Big|\mathscr{E}_I \right)
  \;\ge\; 1 - \frac{2A^2}{n^{\phi}}.
\]
\label{theorem of smoothness}
\end{theorem}
See proof in Section \ref{smoothness of the approximation}.

Theorem~\ref{thm:local-smoothness} and Corollary~\ref{cor:local-shapefun}
are local: they guarantee smoothness on a ball $B(x_0,\delta_0)$ whose radius
$\delta_0 \asymp h_n$ is dictated by the bandwidth. A global statement on all
of $\Omega$ is attainable by enlarging the bandwidth, but only at the expense
of the approximation rate; since our sampling is noise-free, this trade-off is
a genuine loss, and we therefore retain the small, fill-distance--scaled
bandwidth and a local result. We expand on this point in the remarks following
Corollary~\ref{cor:local-shapefun}.  

\section{Bridging the gap between deterministic and stochastic MLS}

This section lays the probabilistic foundations required to rigorously compare deterministic and stochastic MLS frameworks. We begin by introducing two central definitions: the interior cone condition, which imposes a geometric regularity on the domain, and a family of distributions with well-behaved sampling properties. Under these assumptions, we establish that the convergence rate of the fill distance differs from that of the separation distance—a key distinction from the deterministic setting. We then derive properties of how the number of sample points in local neighborhoods grows with 
$n$, and other structural properties of the random point cloud, such as local point density and inter-point distances, that are essential for the analysis.

\begin{definition}
    A set $\Omega\subseteq\mathbb{R}^d$ is said to satisfy an \textbf{interior cone condition} if there exists an angle $\theta\in (0,\pi/2)$ and a radius $r > 0$ such that for every $x\in \Omega$ there is a unit vector $\xi(x)$, such that the cone
    \begin{equation}
        C(x, \xi(x), \theta, r):=\left\{x+\lambda y: y \in \mathbb{R}^d,\|y\|_2=1, y^T \xi(x) \geq \cos \theta, \lambda \in[0, r]\right\}
    \end{equation}
    is contained in $\Omega$.
    \label{cone_cond}
\end{definition}

\begin{remark}
    If the domain has no boundary, then the interior cone condition (see Definition \ref{cone_cond}) is automatically satisfied, and all proofs in the article remain valid. However, if the domain has a boundary, the interior cone condition becomes necessary. The interior cone condition ensures the existence of an angle $\theta$ and a radius $r$ such that a cone of positive volume can be positioned at any point in the domain $\Omega$. As $n \to \infty$, the number of data points contained within such a cone tends to infinity in probability. In particular, the expected number of samples in the cone is proportional to $n$, with proportionality factor bounded below by a positive constant depending only on $\theta$, $r$, and $d$. This guarantees that no substantial difficulty arises near the boundary.
\end{remark}

Having introduced the geometric assumptions on the domain, we now formalize the probabilistic setting.
Let $(\Omega,p(x))$ be a probability space. 
Assume $\Omega$ is a closed compact and satisfies an interior cone condition. Let $p(x)$ be the probability density function. We assume $p(x)>0$ $\forall x\in \Omega$.

\begin{definition} ("nicely behaving distribution").
    Let $\mu$ denote the uniform measure over $\Omega$, induced by the Lebesgue measure.
A probability density function $p(x)$ on $\Omega$ is called nicely behaving distribution, if there exist constants $0<\underline{c} \leq \bar{c}<\infty$ such that $\underline{c} \cdot \mu \leq p(x) \leq \bar{c} \cdot \mu$.
    \label{NicelyDef}
\end{definition}

We now turn to the proof of the Theorem \ref{theorem of order of probability} stated above. The proof is divided into two lemmas: the first establishes the asymptotic behavior of the fill distance, while the second addresses the asymptotic behavior of the separation. Together, these results complete the proof of the theorem and highlight the fundamental distinction between the stochastic and deterministic settings. Specifically, while the deterministic framework typically assumes quasi-uniformity (see Definition \ref{h-delta-condition -levin}), this property fails in the stochastic case.

\begin{lemma}
For a distribution $\mathbb{P}$ over $\Omega$ that satisfies Definition \ref{NicelyDef}, let $X_n=\{x_1,x_2,...,x_n\}$ be a set of $n$ $iid$ samples of $\mathbb{P}$. Denote by $h_n$ the fill distance with respect to $X_n$. Then, 
         \begin{equation}
            h_n = O_P(n^{-\frac{1}{d}} \ln ^{\frac{1}{d}}(n)).
         \end{equation}
    \label{Order in probability of h_n}
\end{lemma} 

\begin{remark}
    A similar upper bound for $h_n$ was first established in \cite{reznikov2016covering}. We include this proof for completeness.
\end{remark}

\begin{proof}
    From the interior cone condition we obtain that $ \text{Vol}_d(B(x,\varepsilon)\cap \Omega) >0$ for all $x\in \Omega$ and all real $\varepsilon>0$, 
    where $B(x,\varepsilon)$ is the open ball in $\mathbb{R}^d$ of radius $\varepsilon$ centered at $x$. Then the condition $p(x)>0$ for all $x\in \Omega$ implies that $P(x_1\in B(x,\varepsilon)\cap \Omega)>0$ and hence $P(x_1\notin B(x,\varepsilon)\cap \Omega)<1$, for all $x\in \Omega$ and all real $\varepsilon>0$. 
    In addition, by the compactness of $\Omega$, for any real $\varepsilon>0$ there is a finite minimal covering set $F_\varepsilon\subseteq \Omega$ such that $\Omega\subseteq \bigcup_{x\in F_\varepsilon}B(x,\varepsilon)$. The cardinality of $F_\varepsilon$ represents the smallest number of closed balls with centers in $\Omega$ and radii $\varepsilon$ whose union covers $\Omega$. We denote the covering number by $\mathcal{N}_\varepsilon = |F_\varepsilon|$. 
    Then, for each real $\varepsilon>0$,
    \begin{equation}
        P\left(h_n \geq 2 \varepsilon\right) \leq P\left(\bigcup_{x \in F_{\varepsilon}} \bigcap_{i=1}^n\left\{x_i \notin B(x,\varepsilon)\right\}\right) \leq \sum_{x \in F_{\varepsilon}} P\left(x_1 \notin B(x,\varepsilon)\right)^n.
        \label{gamma>2eps}
    \end{equation}
Note that as $n\rightarrow \infty$ this probabilty goes to zero, hence, $h_n\rightarrow 0$ in probability. \\
Now, in order to find the probabilistic rate of decay of $h_n$, we first bound the covering number of the domain.
It is shown in Proposition 4.2.12 of \cite{vershynin2018high} that for any set $K \subset \mathbb{R}^d$ and any $\varepsilon > 0$, the covering number $\mathcal{N}_\varepsilon$ satisfies
\begin{equation}
    \frac{\operatorname{vol}_d(K)}{\operatorname{vol}_d(B(x, \varepsilon))} \leq \mathcal{N}_\varepsilon \leq \frac{\operatorname{vol}_d(K + B(x, \varepsilon))}{\operatorname{vol}_d(B(x, \varepsilon))},
    \label{Vershynin_prop}
\end{equation}
where $\operatorname{vol}_d(\cdot)$ denotes $d$-dimensional Lebesgue measure and $+$ denotes the Minkowski sum.
Now, if we denote by $P( B(x,\varepsilon))$ the probability to have a sample in $B(x,\varepsilon)$, then according to the assumptions $\int_{B(x,\varepsilon)} \underline{c} d \mu \leq \int_{B(x,\varepsilon)} d p \leq\int_{B(x,\varepsilon)} \bar{c} d \mu$, it follows that $P(B(x,\varepsilon))\geq \underline{c}\cdot P_{U}(B(x,\varepsilon))$, where $P_U$ is the probability density function (pdf) of the uniform distribution over $\Omega$. 
Thus, the probability that there are no samples in $B(x,\varepsilon)$, is less or equal $ 1- \underline{c}\cdot P_{U}(B(x,\varepsilon))$,
    and therefore we get that 
    \begin{equation}
        \sum_{x \in F_{\varepsilon}} P\left(x_1 \notin B(x,\varepsilon)\right)^n \leq \sum_{i=1}^{\mathcal{N}_\varepsilon}\left(1-\underline{c}\cdot P_{U}(B(x,\varepsilon))\right)^n =\mathcal{N}_\varepsilon\left(1-\underline{c}\cdot P_{U}(B(x,\varepsilon))\right)^n.
    \end{equation}
(Note that $h_n\rightarrow0$ as $n\rightarrow\infty$).
Since $\Omega$ is compact, its volume exists and is finite. In addition, for a very small $\varepsilon$, the contribution of $B(x,\varepsilon)$ to the total volume is marginal, and we denote this total volume by $vol_d(\Omega +B(x,\varepsilon))=\kappa$.  Using this notation with substituting Equation \eqref{Vershynin_prop}, we get that 
    \begin{equation}
        P\left(h_n \geq 2 \varepsilon\right) \leq \frac{\kappa}{\varepsilon ^d}\left(1-\underline{c}\cdot P_{U}(B(x,\varepsilon))\right)^n. \end{equation}
    In particular, we have that $P_{U}(B(x,\varepsilon))=C\cdot\varepsilon^d$, where $C=\frac{\pi^\frac{d}{2}}{\text{Vol}_d(\Omega)\Gamma(\frac{d}{2}+1)}$ and $\Gamma$ is Euler's Gamma function. Hence, if $c=\underline{c}\cdot C$, then
    \begin{equation}
        P\left(h_n \geq 2 \varepsilon\right) \leq \frac{\kappa}{\varepsilon ^d}\left(1-c \varepsilon^d\right)^n. 
        \label{gamma>2epsFinal}
    \end{equation}
Now, in order to find convergence rate of $h_n$, we need to express $\varepsilon$ in terms of n. We know that $1 - x \leq e^{-x}$ for all $x$. So, if we let $x = c\varepsilon^d$, then $$1 - c\varepsilon^d \leq e^{-c \varepsilon^d}.$$
Using this inequality in Equation (\ref{gamma>2epsFinal}), we find that
\begin{equation*}
     \frac{\kappa}{\varepsilon ^d} \cdot\left(1-c \varepsilon^d\right)^n \leq\frac{\kappa}{\varepsilon ^d}\cdot e^{-n c \varepsilon^d}.
\end{equation*}
Furthermore, applying $e^{-y}\leq y\cdot e^{-y}$ for $y\geq 1$, then for $y=nc\varepsilon^d$ we find that
\begin{equation*}
    \frac{\kappa}{\varepsilon ^d}\left(e^{-nc \varepsilon^d}\right) \leq\frac{\kappa}{\varepsilon ^d} \cdot \left(n c\varepsilon^d\cdot e^{-nc \varepsilon^d}\right).
\end{equation*} 
So now we require that $\frac{\kappa}{\varepsilon ^d} \cdot \left(n c\varepsilon^d\cdot e^{-nc\varepsilon^d}\right)\leq\rho_h$ (where $0<\rho_h<1$ since it represents a probability), and if we solve the later inequality we find that 
\begin{equation}
\varepsilon\geq\frac{\ln^\frac{1}{d}(n \cdot \frac{\kappa c}{\rho_h})}{n^\frac{1}{d}\cdot c^\frac{1}{d}}.
    \label{eps.n.d}
\end{equation}
For any  $d\geq 1$, exists $K_h$, a real positive constant, and $N_h$ such that for any $n\geq N_h$,

\begin{equation}
    \varepsilon \geq K_h\cdot n^{-\frac{1}{d}}\cdot \ln ^\frac{1}{d}(n).
    \label{lower bound eps}
\end{equation}
We can then rewrite Equation (\ref{gamma>2epsFinal}) as
\begin{equation*}
    P(h_n\geq 2 \varepsilon)\leq\rho_h,
\end{equation*}
which is equivalent to
\begin{equation*}
    P\left(\left|\frac{h_n}{\varepsilon}\right|>2\right)\leq\rho_h.
\end{equation*}
Substituting the lower bound of $\varepsilon$ from Equation \eqref{lower bound eps}, keeps this inequality valid. Hence, exists a finite $N_h>0$ such that
\begin{equation*}
    P\left(\left|\frac{h_n}{K_h\cdot n^{-\frac{1}{d}}\cdot \ln ^\frac{1}{d}(n)}\right|>2\right)\leq\rho_h ,\,\, \forall n>N_h,
\end{equation*}
or, equivalently, $h_n = O_P(n^{-\frac{1}{d}} \ln ^{\frac{1}{d}}(n))$. 
\end{proof}

\begin{lemma}
For a distribution $\mathbb{P}$ over $\Omega$ that satisfies Definition \ref{NicelyDef}, let $X_n=\{x_1,x_2,...,x_n\}$ be a set of $n$ $iid$ samples of $\mathbb{P}$. Denote by $\delta_n$ the separation of $X_n$. Then, 
         \begin{equation}
            \delta_n = \Omega_P(n^{-\frac{2}{d}}).
         \end{equation} 
\label{Order in probability of delta_n}
\end{lemma}

\begin{remark}
    A version of the bound in Lemma \ref{Order in probability of delta_n} was previously established by \cite{brauchart2018random} for the specific case of a sphere under a uniform distribution. Our result generalizes this bound to any compact domain in $\mathbb{R}^d$ and relaxes the distribution requirement to almost uniform distributions.
\end{remark}

\begin{proof}
    Recall that $P( B(x,\varepsilon))$ is the probability to have an  $\varepsilon$-ball centered at $x$ with a sample in it. We've shown  that this probability is bounded from above by $\bar{c}\cdot P_{U}(B(x,\varepsilon))$, and bounded from below by $\underline{c}\cdot P_{U}(B(x,\varepsilon))$. 
    Note that $P(\delta_n>\varepsilon)$ represents the probability that no two samples from $X$ occupy the same $\varepsilon$-ball around either of them. Thus, considering $\varepsilon$-balls around all samples and using the fact that these balls may not be disjoint, we get the following bound
\begin{equation}
      P(\delta_n>\varepsilon)\geq \prod_{i=1}^{n-1} (1-\sum_{j=1}^i P(B(x_j,\varepsilon)).
 \end{equation}
 Hence, 
\begin{equation}
      P(\delta_n<\varepsilon)\leq 1- \prod_{i=1}^{n-1} (1-\sum_{j=1}^i P(B(x_j,\varepsilon)).
 \end{equation}
 Using the upper bound of $P(B(x,\varepsilon))$, we obtain that 
 \begin{equation}
 1-\prod_{i=1}^{n-1} (1-\sum_{j=1}^i P(B(x_j,\varepsilon)) \leq 1-\prod_{i=1}^{n-1} (1-i \cdot \bar{c} \cdot P_{U}(B(x_i,\varepsilon))),
 \end{equation}
if $c'=\Bar{c}\cdot C$, then by previous results,
\begin{equation}
 1- \prod_{i=1}^{n-1} (1-i \cdot P\left(B(x_i,\varepsilon)\right))\leq 1- \prod_{i=1}^{n-1} (1-i \cdot c' \varepsilon^d).
\end{equation}
Hence, 
\begin{equation}
 P\left(\delta_n<\varepsilon\right) \leq 1-\prod_{i=1}^{n-1} (1-i \cdot c' \varepsilon^d).
\end{equation}
Now,
\begin{equation}
    1-\prod_{i=1}^{n-1}\left(1-i \cdot c' \varepsilon^d\right) \leq 1- \prod_{i=1}^{n-1}\left(1-nc' \varepsilon^d\right)=1-\left(1-nc' \varepsilon^d\right)^n,
\end{equation}
i.e., 
\begin{equation}
  P\left(\delta_n<\varepsilon\right) \leq 1- \left(1-nc' \varepsilon^d\right)^n.
  \label{delta<eps}
\end{equation}
Fix any $\rho_\delta \in (0,1)$. We seek to find a sequence $\varepsilon(n)$ such that the right-hand side of \eqref{delta<eps} is bounded by $\rho_\delta$. Let $\varepsilon(n)$ be defined by setting:
\begin{equation}
    1 - (1 - n c' \varepsilon^d)^n = \rho_\delta.
\end{equation}
Solving for $\varepsilon(n)$ yields:
\begin{equation}
    \varepsilon(n) = \left( \frac{1 - (1 - \rho_\delta)^{1/n}}{n c'} \right)^{1/d}.
    \label{eps_exact}
\end{equation}
To obtain a clean bound for \eqref{eps_exact}, we observe that $(1 - \rho_\delta)^{1/n} = \exp\left( \frac{1}{n} \log(1 - \rho_\delta) \right)$. Applying the inequality $e^{-x} \geq 1 - x$ for $x \geq 0$, where $x = \frac{1}{n} \log\left(\frac{1}{1 - \rho_\delta}\right)$, we have:
\begin{equation}
    (1 - \rho_\delta)^{1/n} \geq 1 - \frac{\log(1 / (1 - \rho_\delta))}{n}.
\end{equation}
Rearranging this inequality gives:
\begin{equation}
    1 - (1 - \rho_\delta)^{1/n} \leq \frac{\log(1 / (1 - \rho_\delta))}{n}.
\end{equation}
Substituting this into \eqref{eps_exact}, we define the constant $K_\delta$ as:
\begin{equation}
    K_\delta := \left( \frac{\log(1 / (1 - \rho_\delta))}{c'} \right)^{1/d}.
\end{equation}
It follows that for all $n \geq 1$:
\begin{equation}
    \varepsilon(n) \leq K_\delta \, n^{-2/d}.
\end{equation}
Consequently, substituting this bound back into \eqref{delta<eps}, we obtain:
\begin{equation}
    P\left( \delta_n < K_\delta \, n^{-2/d} \right) \leq \rho_\delta, \quad \forall n \geq 1.
\end{equation}
By the definition of order in probability for lower bounds, this strictly implies:
\begin{equation}
    \delta_n = \Omega_P(n^{-2/d}).
\end{equation}

\end{proof}

\begin{coro}
    Since $h_n = O_P(n^{-\frac{1}{d}} \ln ^{\frac{1}{d}}(n))$ and $\delta_n = \Omega_P(n^{-\frac{2}{d}})$ we get that the quasi-uniform property does not hold under the sampling assumptions of Definition \ref{h-delta-condition -levin}.
    \label{Failure of Quasi-corollary}
\end{coro}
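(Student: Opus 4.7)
The quasi-uniform property demands a single constant $c_{qu}$ with $h_n \leq c_{qu}\delta_n$, i.e., the ratio $h_n/\delta_n$ must remain bounded in $n$. My plan is to show instead that $h_n/\delta_n \to \infty$ in probability, which rules out the existence of any such constant and therefore establishes the corollary.

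The orders already recorded in Theorem~\ref{theorem of order of probability} point in this direction: $h_n$ sits at scale $n^{-1/d}$ (up to a log factor) while $\delta_n$ sits at scale $n^{-2/d}$. Since Lemma~\ref{Order in probability of h_n} gives only an upper bound on $h_n$ and Lemma~\ref{Order in probability of delta_n} only a lower bound on $\delta_n$, these two one-sided statements do not, strictly speaking, force the ratio to diverge, and so to make the argument fully rigorous I would supply the two missing complementary bounds. A deterministic volume-covering argument---the $n$ balls of radius $h_n$ centered at the samples must cover the compact $\Omega$---gives $h_n \geq c_1 n^{-1/d}$. A pigeonhole/birthday-paradox estimate---partition $\Omega$ into roughly $n^2/\log n$ cells of equal $\mu$-measure, use the density bound from Definition~\ref{NicelyDef} to control per-cell probabilities from below, and bound the probability that every cell contains at most one sample---yields $\delta_n = O_P\!\left(n^{-2/d}\log^{1/d}(n)\right)$.

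Combining the two, $h_n/\delta_n$ is bounded below in probability by a positive multiple of $n^{1/d}/\log^{1/d}(n)$, which diverges. Hence for every fixed $c_{qu}$ one has $\mathbb{P}\!\left(h_n > c_{qu}\delta_n\right)\to 1$ as $n\to\infty$, and no constant can simultaneously enforce $h_n\leq c_{qu}\delta_n$ with high probability. The main technical step is the pigeonhole upper bound on $\delta_n$: translating the density bounds from Definition~\ref{NicelyDef} into uniform per-cell probability bounds and then executing the collision calculation under a non-uniform (but nicely-behaving) density requires some care. The matching lower bound on $h_n$ is essentially immediate from volume considerations, and assembling the inequalities afterwards is routine.
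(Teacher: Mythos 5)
Your proof is correct, and it takes a genuinely different route from the paper---in fact, the paper offers no real argument for this corollary at all: it simply asserts that the conclusion follows ``since'' $h_n = O_P(n^{-1/d}\ln^{1/d}(n))$ and $\delta_n = \Omega_P(n^{-2/d})$. You are right to observe that these two one-sided statements do not logically force the ratio $h_n/\delta_n$ to diverge: an upper bound on $h_n$ combined with a lower bound on $\delta_n$ only yields an \emph{upper} bound on $h_n/\delta_n$, and is perfectly consistent with, say, $h_n \asymp \delta_n \asymp n^{-1/d}$, in which case quasi-uniformity would hold. The paper implicitly relies on the rates being tight in both directions (the cited references for the covering radius and for the separation on the sphere do give matching bounds in their settings), but neither Lemma~\ref{Order in probability of h_n} nor Lemma~\ref{Order in probability of delta_n} as stated supplies the needed direction. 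Your two supplements close exactly this gap: the deterministic covering bound $n\,\mathrm{vol}(B(0,h_n)) \ge \mathrm{vol}(\Omega)$ gives $h_n \ge c_1 n^{-1/d}$ with no probabilistic content at all, and the birthday-paradox partition into $\sim n^2/\log n$ cells (where the probability of no collision is roughly $e^{-n^2/(2m)} = n^{-1/2} \to 0$, and a collision confines two points to a cell of diameter $O(n^{-2/d}\log^{1/d} n)$) gives the matching upper bound on $\delta_n$; the density bounds of Definition~\ref{NicelyDef} are exactly what is needed to make the per-cell probabilities uniform up to constants. Together these yield $h_n/\delta_n \gtrsim n^{1/d}/\log^{1/d}(n) \to \infty$ in probability, so no constant $c_{\mathrm{qu}}$ can work. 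What your approach buys is a rigorous proof of the corollary as stated; what it costs is two additional lemmas that the paper does not prove. It would be worth flagging to the authors that either these complementary bounds should be added, or the corollary should be restated so that it follows from what is actually proven.
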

Corollary \ref{Failure of Quasi-corollary} poses a challenge, as the quasi-uniform property is a common assumption, particularly in the deterministic MLS method. Consequently, the proofs in \cite{levin1998approximation}, \cite{mirzaei2015analysis}, and \cite{wendland2005scattered} cannot be directly applied to a stochastically generated data set, necessitating modifications to their arguments.
Our proposed solution is to prove Theorem \ref{main_result}, assuming that the stochastic data is well behaved distribution, as defined in Definition \ref{NicelyDef}.

As a preliminary step, we first establish a fundamental lemma that captures a key asymptotic property of the sample size as $n\rightarrow\infty$, which underpins each of these results.

\begin{lemma}
Let $D_n$ be a ball of radius $R_n \propto \left(\frac{\log n}{n}\right)^{1/d}$, centered at some point $x\in \Omega$ and let 
$$ 
N_{D_n} = \sum_{i=1}^n \mathbf{1}_{\{x_i \in D_n\}}$$
denote the number of sample points lying in $D_n$. Then, there exist finite constants $0 < \gamma_1 \leq \gamma_2$ such that for some constant $c>0$,
$$ P\left( \gamma_1 \log n \ \leq\ N_{D_n} \ \leq\ \gamma_2 \log n \right) \geq 1-\frac{2}{n^c},
$$
i.e.,
\[
\lim_{n \to \infty} P\left( \gamma_1 \log n \ \leq\ N_{D_n} \ \leq\ \gamma_2 \log n \right) = 1.
\]
    \label{convergence rate of sample size growth}
\end{lemma}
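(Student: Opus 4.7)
The plan is to observe that $N_{D_n}$ is a Binomial random variable and to apply standard concentration (Chernoff) bounds, after first pinning down the mean $\mu_n := \mathbb{E}[N_{D_n}]$ up to constants. Write $N_{D_n}=\sum_{i=1}^n \mathbf{1}_{\{x_i\in D_n\}}$ with success probability $p_n := P(x_1\in D_n)=\int_{D_n\cap\Omega}p(y)\,dy$. By Definition \ref{NicelyDef}, $p_n$ is sandwiched between $\underline{c}$ and $\bar c$ times the normalized Lebesgue volume of $D_n\cap\Omega$, so everything reduces to bounding $\mathrm{Vol}_d(D_n\cap\Omega)$ from above and below by constant multiples of $R_n^d$.

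The upper bound is trivial: $\mathrm{Vol}_d(D_n\cap\Omega)\le V_d\,R_n^d$ where $V_d$ is the volume of the unit ball. For the lower bound I would use the interior cone condition (Definition \ref{cone_cond}): for $n$ large enough that $R_n<r$, the cone $C(x,\xi(x),\theta,R_n)$ is contained in both $D_n$ and $\Omega$, and its volume equals a fixed constant $c_\theta$ (depending only on $d$ and $\theta$) times $R_n^d$. Combining these with the nicely behaving density gives uniform constants $\beta_1,\beta_2>0$ with $\beta_1 R_n^d\le p_n\le \beta_2 R_n^d$, hence
\begin{equation*}
\beta_1 C_D^d\,\log n \;\le\; \mu_n \;\le\; \beta_2 C_D^d\,\log n.
\end{equation*}
Set $a:=\beta_1 C_D^d$ and $b:=\beta_2 C_D^d$, so that $a\log n\le\mu_n\le b\log n$.

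The final step is a multiplicative Chernoff bound: for any $\eta\in(0,1)$,
\begin{equation*}
P\!\left(N_{D_n}\ge(1+\eta)\mu_n\right)\le \exp\!\left(-\tfrac{\eta^2}{3}\mu_n\right),\qquad
P\!\left(N_{D_n}\le(1-\eta)\mu_n\right)\le \exp\!\left(-\tfrac{\eta^2}{2}\mu_n\right).
\end{equation*}
Since $\mu_n\ge a\log n$, both tails are bounded by $n^{-\eta^2 a/3}$. Picking any fixed $\eta\in(0,1)$, setting $\gamma_1:=(1-\eta)a$, $\gamma_2:=(1+\eta)b$, and $c:=\eta^2 a/3$, a union bound yields $P(\gamma_1\log n\le N_{D_n}\le\gamma_2\log n)\ge 1-2n^{-c}$, which is exactly the claim; Borel--Cantelli (or simply $n^{-c}\to 0$) then gives the probability-one limit.

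The main subtlety I expect is keeping the constants uniform in the base point $x$, since the volume of $D_n\cap\Omega$ can shrink for $x$ near $\partial\Omega$. The interior cone condition is precisely designed to handle this: the angle $\theta$ and radius $r$ are uniform over $\Omega$, so the lower bound constant $c_\theta$ does not degenerate near the boundary. Everything else is routine, provided one is careful that the Chernoff exponent $c$ is expressed in terms of the \emph{lower} bound $a$ of $\mu_n/\log n$ rather than $\mu_n$ itself, so that the tail is genuinely polynomial in $n$.
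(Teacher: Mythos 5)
Your proof is correct and follows essentially the same route as the paper's: recognize $N_{D_n}$ as $\mathrm{Bin}(n,p_n)$, sandwich $\mathbb{E}N_{D_n}$ between constant multiples of $\log n$ via the nicely behaving density, and conclude with a multiplicative Chernoff bound. Your treatment is in fact slightly more careful than the paper's on two points it glosses over: you use the interior cone condition to keep the lower volume bound on $D_n\cap\Omega$ uniform near $\partial\Omega$, and you make explicit that the Chernoff exponent must be taken with respect to the lower bound $a\log n$ on the mean so that the tail is genuinely polynomial in $n$.
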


\begin{proof}
Let \( N_{D_n} = \sum_{i=1}^n \mathbf{1}_{\{x_i \in D_n\}} \). Each indicator variable \( \mathbf{1}_{\{x_i \in D_n\}} \) is Bernoulli with success probability \( p_n = P(x_i \in D_n) \). Then \( N_{D_n} \sim \text{Bin}(n, p_n) \), and:
\[
\mathbb{E} N_{D_n} = n p_n = n \int_{D_n} p(x) dx.
\]
Since \( D_n \) is a ball of radius \( R_n  \propto \left(\frac{\log n}{n}\right)^{1/d} \), its volume is
\[
\operatorname{Vol}(D_n) \propto  R_n^d \propto \frac{\log n}{n}.
\]
Because \( p(x) \) is bounded below by a constant \( \underline{c} > 0 \) on its support, we obtain:
\begin{equation}
    \mathbb{E} N_{D_n} \geq n \cdot \int_{D_n} \underline{c} \, dx = \underline{c} \cdot n \cdot \operatorname{Vol}(D_n) \geq c_1 \cdot \log n
    \label{lower bound inequality ENDn}
\end{equation}
for some constant \( c_1 > 0 \). Likewise, since \( p(x) \leq \bar{c} \), we also get:
\begin{equation}
    \mathbb{E} N_{D_n} \leq \bar{c} \cdot n \cdot \operatorname{Vol}(D_n) \leq c_2 \cdot \log n.
\label{upper bound inequality ENDn}
\end{equation}
We now apply the Chernoff bound: If \( X \sim \text{Bin}(n, p) \) with \( \mu = \mathbb{E}X \), then for any \( 0 < \delta < 1 \),
\[
P\left(|X - \mu| \geq \delta \mu\right) \leq 2 \exp\left(-\frac{\delta^2 \mu}{3}\right).
\]
Applying this to \( N_{D_n} \) with \( \mu = \mathbb{E} N_{D_n} \), we obtain:
\[
P\left( \left| N_{D_n} - \mathbb{E} N_{D_n} \right| \geq \delta \mathbb{E} N_{D_n} \right) \leq 2 \exp\left(-c \log n\right) = \frac{2}{n^c},
\]
where $c$ is a positive constant dependents on $\delta$.
Thus, 
\begin{equation}
    P\left((1 - \delta) \mathbb{E} N_{D_n}\leq N_{D_n} \leq (1 + \delta) \mathbb{E} N_{D_n}\right)\geq 1-\frac{2}{n^c}.
\end{equation}
Denote $\gamma_1=(1-\delta)c_1$ and  $\gamma_2=(1+\delta)c_2 $, and use the inequalities in \eqref{lower bound inequality ENDn},\eqref{upper bound inequality ENDn} we obtain that
\begin{equation}
    P\left( \gamma_1 \log n \ \leq\ N_{D_n} \ \leq\ \gamma_2 \log n \right) \geq 1-\frac{2}{n^c},
    \label{the probability of expected number of samples in D_n}
\end{equation}
which implies that 
$$
\lim_{n \to \infty} P\left( \gamma_1 \log n \ \leq\ N_{D_n} \ \leq\ \gamma_2 \log n \right) = 1.
$$
\end{proof}
We denote by $\mathscr{E}_I $ the event  
\begin{equation}
    \mathscr{E}_I:=\{ \# I_{B(\hat{x}, s h_n)} \in [\gamma_1 \log n,\gamma_2 \log n] \},
\label{E_I event}
\end{equation}
where $I_{B(\hat{x}, s h_n)}$ is the index set around a fixed but arbitrary point $\hat{x}\in\Omega$:
$$
I_{B\left(\hat{x},sh_n\right)}=\left\{i: x_i \in B\left(\hat{x},sh_n\right)\right\},
$$
and $B\left(\hat{x}, sh_n\right)=\left\{x \in \mathbb{R}^d:\|x-\hat{x}\| \leq sh_n\right\}$.

We wish to emphasize an important point regarding the result of Lemma \ref{convergence rate of sample size growth}. As the total number of samples increases, the number of samples contained in any fixed ball tends to infinity, providing the formal basis of the failure in Theorem \ref{theorem of order of probability}.  
In the stochastic setting, however, this phenomenon is not only unavoidable but also advantageous, since it is natural for the quality of the approximation to improve as the number of samples grows. By adapting the proof techniques to accommodate this behavior, together with our results, the difficulty is resolved, and the conclusions obtained in the deterministic framework can be retained.

\section{Proof of Theorem \ref{main_result}} \label{Main_Result_proof}
We first establish Lemma \ref{main_result_assisting}, which is a refined version of Theorem \ref{main_result}. In this lemma, the analysis is carried out for individual partial derivatives of fixed order, represented by multi-index derivatives. This allows us to treat each derivative separately and obtain precise bounds. We then derive Theorem \ref{main_result} as a direct consequence by applying the lemma to general linear differential operators with constant coefficients.

\begin{lemma}
    Let $X_n$ be a sample set of size $n$ (sufficiently large) drawn from a nicely behaving distribution \emph{(}Definition \ref{NicelyDef}\emph{)} on $\Omega$, which satisfies the interior cone condition \emph{(}Definition \ref{cone_cond}\emph{)}. 
    Let $F_n$ be the corresponding sampled values of some function $f:\mathbb{R}^d\rightarrow\mathbb{R}$, and let $\mlsfunstoch(x)$ be the value of the $(k-1)$ degree MLS function at a point $x$, then for any $\hat{x}\in \Omega$ and any multi-index $\alpha$ with  $|\alpha|\leq k-1$,

    \begin{equation}
        P\left(\left|\partial^{\alpha} \mlsfunstoch(\hat{x})-\partial^{\alpha} f(\hat{x})\right| \leq\mathcal{K}\cdot h_n^{k-|\alpha|}\right)\geq 1-\frac{1}{n^{c_\mu}},
    \end{equation}
    where $\mathcal{K}$ and $c_\mu$ are some positive constants.  \label{main_result_assisting}
\end{lemma}

\begin{coro}
    \begin{equation}
    \lim_{n\rightarrow\infty}P\left(\left|\partial^{\alpha} \mlsfunstoch(\hat{x})-\partial^{\alpha} f(\hat{x})\right| \leq\mathcal{K}\cdot h_n^{k-|\alpha|}\right)=1.
    \end{equation}  
\end{coro}

\begin{proof}

Let $X_n=\left\{x_i\right\}_{i=1}^n\subset\mathbb{R}^d$ be a set of scattered data points.

In the following, we work under the assumption that the event $\mathscr{E}_I$ defined in \eqref{E_I event} holds;  under this event, by \eqref{the probability of expected number of samples in D_n}, the expected number of samples in the index set
\( I_{B(\hat{x}, s h_n)} \) is contained in $[\gamma_1 \log n,\gamma_2 \log n]$ with probability at least \( 1 - \frac{2}{n^c} \). 

In \cite{levin1998approximation} it was shown that the MLS approximation is given by
$$
\mlsfunstoch(x)=\sum_{i \in I_{B\left(\hat{x}, s h_n\right)}} a_i^*(x) f\left(x_i\right),
$$
where the coefficients $a_i^*(x)$ are called \textit{shape functions}. These functions are obtained by minimizing a quadratic form, a procedure rooted in the foundational work of Backus and Gilbert \cite{backus1968resolving, backus1970uniqueness} regarding optimal local averaging kernels. Specifically, the shape functions are defined as:
$$
a_i^*(x)=\arg\min_{a_i(x)}\sum_{i \in I_{B\left(\hat{x}, s h_n\right)}} a_i(x)^2 \frac{1}{\theta_h\left(x_i-x\right)},
$$
under the constraints
$$
\sum_{i \in I_{B\left(\hat{x}, s h_n\right)}} a_i(x) p\left(x_i\right)=p(x), \quad \forall p \in \Pi_{k-1}\left(\mathbb{R}^d\right),
$$
and $\theta_h(x_i-x)$ is a weight function satisfies the properties in \eqref{theta_weight func}.
Let $A$ denote the number of multi-indices $\alpha = (\alpha_1, \dots, \alpha_d)$ with nonnegative entries such that $|\alpha|\leq k$. For a multi-index $\alpha$ and a vector $v \in \mathbb{R}^d$, we define $v^\alpha = \prod_{j=1}^d v_j^{\alpha_j}$, where $|\alpha| = \sum_{j=1}^d \alpha_j$ denotes the order of the multi-index.
It is shown in \cite{mirzaei2015analysis} that the shape functions can be expressed  in the following form:
\begin{equation}
    a_i^*(x)=\theta_h(x_i-x) \sum_{|\alpha| \leq k-1} \eta_\alpha(x) \frac{\left( x_i-x\right)^\alpha}{h_n^{|\alpha|}}, \quad i=1,2, \ldots, n.
    \label{explicitly a*_i}
\end{equation}
where $\eta_\alpha(x)$ are the components of $\eta(x)\in\mathbb{R}^A$. The polynomial--reproduction property of the shape functions determines $\eta(x)$ as the solution of the positive definite system
\begin{equation}
\mathscr{G}_n(x)\,\eta(x)=\boldsymbol{p}(x),
\label{gram system}
\end{equation}
where $\boldsymbol{p}(x)\in \mathbb{R}^A$ is the vector whose components are $p_\alpha(x)$,
\begin{equation}
    p_\alpha(x)=\left\{\frac{(x-\hat{x})^\alpha}{h_n^{|\alpha|}}\right\}_{0 \leq|\alpha| \leq k-1},
    \label{eq:polynomial basis}
\end{equation}
and $\mathscr{G}_n(x)\in \mathbb{R}^{A\times A}$ is the \emph{(unnormalized)} Gram matrix with entries
\begin{equation}
    \mathscr{G}_{n\alpha \beta}(x)=\sum_{i=1}^n \theta_h(x_i - x) \frac{(x_i - x)^\alpha}{h_n^{|\alpha|}} \cdot \frac{(x_i - x)^\beta}{h_n^{|\beta|}}, \quad \alpha, \beta =1, \ldots, A.
    \label{A_nab in main proof}
\end{equation}
The normalized moment matrix $\mathscr{A}_n(x)$ analyzed below---the empirical weighted covariance matrix, which converges to the theoretical weighted covariance matrix as $n\to\infty$---is obtained by averaging $\mathscr{G}_n$ over the active samples,
\begin{equation}
\mathscr{A}_n(x)=\frac{1}{N_{B(\hat{x}, s h_n)}}\,\mathscr{G}_n(x),
\qquad N_{B(\hat{x}, s h_n)}:=\#I_{B(\hat{x}, s h_n)} .
\label{gram normalized relation}
\end{equation}
Consequently the reproduction system \eqref{gram system} is equivalent to
\begin{equation}
\eta(x)=\mathscr{G}_n^{-1}(x)\,\boldsymbol{p}(x)
       =\frac{1}{N_{B(\hat{x}, s h_n)}}\,\mathscr{A}_n^{-1}(x)\,\boldsymbol{p}(x).
\label{eta normalized}
\end{equation}
To illustrate the approximation of a function by the stochastic MLS operator, we follow Levin's approach \cite{levin1998approximation}. Given an arbitrary $\hat{x}\in\Omega$, we define the MLS error as
$$
E_{F_n,X_n}(\hat{x})=\mlsfunstoch(\hat{x})-f(\hat{x}).
$$
By adding and subtracting a polynomial $p(x)$ of degree $\leq k$ and  centered around $\hat{x}$, we get
\begin{equation}
    E_{F_n,X_n}(\hat{x})=\sum_{i \in I_{B\left(\hat{x}, s h_n\right)}} a_i^*(\hat{x})\left(f\left(x_i\right)-p\left(x_i\right)\right)+p(\hat{x})-f(\hat{x}).
    \label{E_(F_n,X_n)}
\end{equation}
Denote by $p_f$ the taylor polynomial of $f(x)$ around $\hat{x}$. We have:
\begin{equation}
    \max _{x \in B(\hat{x}, s h_n)}|f(x)-p_f(x)| \leq C|f|_{\mathcal{C}^k\left(B\left(\hat{x},s h_n\right)\right)} h_n^k,
    \label{taylor reminder}
\end{equation}
where the seminorm, $|f|_{\mathcal{C}^k\left(B\left(\hat{x},s h_n\right)\right)} $, is defined by
$$
|f|_{\mathcal{C}^k\left(B\left(\hat{x},s h_n\right)\right)} \triangleq \max _{\substack{|\alpha|=k \\ x \in B\left(\hat{x}, s h_n\right)}}\left|\partial^\alpha f(x)\right|
,$$
with $\alpha=\left(\alpha_1, \ldots, \alpha_d\right)$ denoting a multi-index of order $|\alpha|=k$, and
$$
\partial^\alpha f(x)=\partial_{x_1}^{\alpha_1} \partial_{x_2}^{\alpha_2} \cdots \partial_{x_d}^{\alpha_d} f(x).
$$
Using $p_f$ as the polynomial in Equation \eqref{E_(F_n,X_n)} we get
$$
\left|E_{F_n,X_n}(\hat{x})\right| \leq C|f|_{\mathcal{C}^k\left(B\left(\hat{x},s h_n\right)\right)} h_n^k .
$$
Since the focus is on measuring the error in the derivatives, we obtain
$$
\partial^\alpha E_{F_n,X_n}(x)=\sum_{i \in I_{B\left(x, h_n\right)}}\left(\partial^\alpha a_i^*(x)\right)\left(f\left(x_i\right)-p_f\left(x_i\right)\right)+\partial^\alpha(p_f(x)-f(x)).
$$
Note that in the current case, $\partial^\alpha$ is scalar-valued as $f: \mathbb{R}^d \rightarrow \mathbb{R}$. Then, for a fixed but arbitrary $\hat{x} \in \Omega$ we get
\begin{equation}
  \left|\partial^\alpha E_{F_n,X_n}(\hat{x})\right| \leq \max _{x \in B(\hat{x},s h_n)}|f(x)-p_f(x)| \sum_{i \in I_{B\left(\hat{x}, s h_n\right)}}\left|\partial^\alpha a_i^*(\hat{x})\right|+\left|\partial^\alpha p_f(x)-\partial^\alpha f(x)\right|.  
  \label{bound_on_dE}
\end{equation}
Since $p_f$ is the Taylor expansion of $f(x)$ at $\hat{x}$ and $\|\hat{x}-x\| \leq sh_n$ we get that
\begin{equation}
    \left|\partial^\alpha p_f(x)-\partial^\alpha f(x)\right| \leq c_1|f|_{\mathcal{C}^k(B(\hat{x},s h_n))}h_n^{k-|\alpha|}.
     \label{bound_on_dp-df}
\end{equation}
Given the event $\mathscr{E}_I$ hold, we show in Lemma \ref{lemma 3.11 davoud-bound shape function} that for all $x \in B(\hat{x},s h_n)$, 
\begin{equation}
    P\left(\sum_{i \in I_{B\left(x, s h_n\right)}}\left|\partial^\alpha a_i^*(x)\right| \leq C_\alpha^{\prime} \cdot h_n^{-|\alpha|}\Big|\mathscr{E}_I\right)\geq 1-\frac{1}{n^{s_1}},
    \label{probability _bound_on_da*}
\end{equation}
where $C'_{\alpha}$ is a positive constant.
Therefor, using \eqref{probability _bound_on_da*} to bound the probability of the event $\mathscr{E}_I$, we have, 

\begin{equation}
    P\left(\sum_{i \in I_{B\left(x, s h_n\right)}}\left|\partial^\alpha a_i^*(x)\right| \leq C_\alpha^{\prime} \cdot h_n^{-|\alpha|}\right)\geq 1-\frac{2}{n^c}-\frac{1}{n^{s_1}}.
    \label{bound on da* with E_I}
\end{equation}
Hence, exists a constant  $c_{\mu}$ such that
\begin{equation}
    P\left(\sum_{i \in I_{B\left(x, s h_n\right)}}\left|\partial^\alpha a_i^*(x)\right| \leq C_\alpha^{\prime}\cdot h_n^{-|\alpha|}\right)\geq 1-\frac{1}{n^{c_\mu}}.
\end{equation}
Substituting the bounds from Equations \eqref{bound_on_dp-df} and \eqref{bound on da* with E_I} into Equation \eqref{bound_on_dE}, and using the fact that $\max_{x \in B(\hat{x},h_n)} |f(x) - p_f(x)|$ is bounded by $h_n^k$ up to a positive constant (Taylor remainder estimate, \eqref{taylor reminder}), we evaluate at $\hat{x}$, and we obtain that exits a positive constant $\mathcal{K}$ such that
$$ 
P\left(\left|\partial^{\alpha} \mlsfunstoch(\hat{x})-\partial^{\alpha} f(\hat{x})\right| \leq\mathcal{K}\cdot h_n^{k-|\alpha|}\right)\geq 1-\frac{1}{n^{c_\mu}}.
$$
That is,
$$ \lim_{n\rightarrow\infty}P\left(\left|\partial^{\alpha} \mlsfunstoch(\hat{x})-\partial^{\alpha} f(\hat{x})\right| \leq\mathcal{K}\cdot h_n^{k-|\alpha|}\right)=1,
$$
as desired. 
\end{proof}

\begin{proof}[Proof of \emph{\textit{Theorem} \ref{main_result}}]
Let $Q$ be a linear differential operator of order $m$, which can be written as
\[
Q=\sum_{|\alpha|\le m} q_\alpha\,\partial^\alpha,
\]
where the coefficients $q_\alpha$ are real constants. Let
\[
M_Q:=\{\alpha:\,|\alpha|\le m,\ q_\alpha\neq 0\},
\]
which is a finite index set.
Define the vector $\mathscr{Q}_n\in\mathbb{R}^{\#(M_Q)}$ by
\[
(\mathscr{Q}_n)_\alpha=\frac{\alpha!\,q_\alpha}{h_n^{|\alpha|}},
\qquad \alpha\in M_Q.
\]
With this notation, the operator $Q$ can be expressed as
\[
Q=\mathscr{Q}_n^{\prime}\,\partial^\alpha.
\]
By Lemma \ref{main_result_assisting}, for each fixed $\alpha\in M_Q$,
\[
P\!\left(\left|\partial^\alpha \mlsfunstoch(\hat{x})
-\partial^\alpha f(\hat{x})\right|
\ge \mathcal{K}\,h_n^{k-|\alpha|}\right) \leq \frac{1}{n^{c_\mu}}.
\]
Applying the union bound over the finite set $M_Q$, we obtain
\begin{align*}
&P\!\left(
\max_{\alpha\in M_Q}
\left|\partial^\alpha \mlsfunstoch(\hat{x})
-\partial^\alpha f(\hat{x})\right|
\ge \mathcal{K}\,h_n^{k-|\alpha|}
\right) \\
&\qquad\le
\sum_{\alpha\in M_Q}
P\!\left(\left|\partial^\alpha \mlsfunstoch(\hat{x})
-\partial^\alpha f(\hat{x})\right|
\ge \mathcal{K}\,h_n^{k-|\alpha|}\right)
\leq \#(M_Q)\cdot\frac{1}{n^{c_\mu}} =\frac{C_{M_Q}}{n^{c_\mu}},
\end{align*}
where $C_{M_Q}$ is a constant independent of $n$.
Consequently,
$$
P\!\left(\left|Q\mlsfunstoch(\hat{x})-Qf(\hat{x})\right|
\ge \mathcal{K}\,h_n^{k-m}\right)\leq \frac{C_{M_Q}}{n^{c_\mu}},
$$
i.e.,
\[
P\!\left(\left|Q\mlsfunstoch(\hat{x})-Qf(\hat{x})\right|
\ge \mathcal{K}\,h_n^{k-m}\right)
\;\xrightarrow[n\to\infty]{}\;0,
\]
which proves the theorem.
\end{proof}

In the proof of Lemma \ref{main_result_assisting}, we rely on a series of auxiliary lemmas. These lemmas provide probabilistic bounds for key components of the MLS method, including the weight function, the matrix $\mathscr{A}_n(x)$, its inverse, and the shape functions. The results are used to control the behavior of the approximation and establish the required convergence properties.

We begin by controlling the derivatives of the matrix $\mathscr{A}_n(x)$ \eqref{gram normalized relation}, which appears (up to normalization) in the normal equations of the MLS method.

\begin{remark}
    For simplicity of the writing, in Lemmas (\ref{lemma 3.6 davoud -bound on dA}), (\ref{lemma 3.7 davoud - lambda_min}), and (\ref{lemma 3.10 davoud - bound a^-1}), 
the point \( x \in \Omega \) is assumed to be an interior point of \( \Omega \). 
For points on the boundary, the results remain valid, due to the interior cone condition, although the constants may differ. 
By taking the smaller value for the lower bound and the larger value for the upper bound, 
the statements can be extended to cover all cases.
\end{remark}

\begin{lemma}
    Let the event $\mathscr{E}_I$ defined in \eqref{E_I event} hold. Then, for a fixed but arbitrary evaluation point ${x}\in \Omega$,
    \begin{equation}
    P\left(\|\partial^{\alpha} \mathscr{A}_n({x})\| \leq \tilde{C}_\alpha h_n^{-|\alpha|}\Big|\mathscr{E}_I\right) \geq 1 - \frac{2}{n^{\tilde{c}}}, 
\end{equation}
    where $\tilde{C_\alpha},\tilde{c}$ are positive constant, and $\mathscr{A}_n$ is the matrix defined in \eqref{gram normalized relation}.
    \label{lemma 3.6 davoud -bound on dA}
\end{lemma}

\begin{proof}

Let $B\left(\hat{x}, h_n\right)=\left\{x \in \mathbb{R}^d:\|x-\hat{x}\| \leq h_n\right\}$. 
Denote $I_{B\left(\hat{x}, h_n\right)}=\left\{i:X_i\in B(\hat x, h_n)\right\}$ and $N_{B\left(\hat{x}, h_n\right)}=\#I_{B\left(\hat{x}, h_n\right)}$ is the number of elements in this ball.
For $\Pi_{k-1}^d$ we consider the basis
\begin{equation}
    p_\alpha(x)=\left\{\frac{(x - \hat x)^{\alpha}}{h_n^{|{\alpha}|}}\right\}_{0 \leqslant|{\alpha}| \leq k-1},
    \label{poly_basis}
\end{equation}
which is the monomial bases centered around the point $\hat x$ and scaled by the fill distance.
This choice of scaled monomial basis is made for convenience. The MLS formulation, and in particular the matrix $\mathscr{A}_n$ and its derivatives, are invariant under a change of basis in $\Pi_{k-1}^d$. Consequently, the estimates derived below do not depend on the specific choice of polynomial basis and hold for any basis of $\Pi_{k-1}^d$.
Thus, the MLS function can be written as
\begin{equation}
    \mlsfunstoch(x)=\sum_{\alpha\in A} \hat{b}_{n \alpha}(x) p_\alpha(x)
    \label{mls_as_sum}
,\end{equation}
where $A$ denotes the collection of all $d$-tuples $\alpha$ of nonnegative integers such that $|\alpha| \leq k-1$. The expression $\hat{b}_{n\alpha}(x)$ is the $\alpha$-coordinate of the vector of size $A$ which is given by
\begin{equation}
    \hat{b}_n(x)=\mathscr{A}_n^{-1}(x) \mathscr{X}_n^{\top}(x) F_n,
    \label{b_n-alpha}
\end{equation}
where $\mathscr{X}_n(x)$ and $\mathscr{A}_n(x)$ are defined as follows:
Using the basis defined in Equation (\ref{poly_basis}), we can construct the normal equations accordingly (the full formulation of the least-squares problem and the solution of the resulting normal equations are given in the \ref{Normal Equations Appendix}). The $n\times A$ design
matrix, $\tilde{\mathscr{X}}_n(x)$, takes the form 

\begin{equation*}
    \tilde{\mathscr{X}}_n(x) = \left(
\begin{array}{cccc}
\sqrt{\theta_1} p_{\alpha_0}(X_1) & \sqrt{\theta_1} p_{\alpha_1}(X_1) & \cdots & \sqrt{\theta_1} p_{\alpha_{|A|}}(X_1) \\
\vdots & & & \vdots \\
\sqrt{\theta_n} p_{\alpha_0}(X_n) & \sqrt{\theta_n} p_{\alpha_1}(X_n) & \cdots & \sqrt{\theta_n} p_{\alpha_{|A|}}(X_n)
\end{array}
\right),
\end{equation*}
where $\theta_i=\theta_h(x_i-x)$ and $p_{\alpha_j}; \,\,0\leq j\leq A$ is according to (\ref{poly_basis}).
From this, the normal equations are
\[
 \tilde{\mathscr{X}}_n(x)^\top \tilde{\mathscr{X}}_n(x) \hat{b}_n(x) =   \tilde{\mathscr{X}}_n(x)^\top F_n(x),
\]
which leads to the solution
\[
\hat{b}_n(x) = (\tilde{\mathscr{X}}_n(x)^\top \tilde{\mathscr{X}}_n(x))^{-1} \tilde{\mathscr{X}}_n(x)^\top F_n(x).
\]
Next, we normalize the design matrix by rescaling it: 
\[
\mathscr{X}_n(x) = \frac{1}{N_{B\left(x, h_n\right)}} \tilde{\mathscr{X}}_n(x),
\]
i.e.,
$$\mathscr{X}_{n i \alpha}(x)= \begin{cases}\frac{\sqrt{\theta_h(x_i-x)}}{N_{B\left(x, h_n\right)}} \frac{\left(x_i-x\right)^\alpha}{h_n^{|\alpha|}}, & i \in I_{B\left(x, h_n\right)} \text { and } \alpha \in A, \\ 0, & i \notin I_{B\left(x, h_n\right)} \text { and } \alpha \in A.\end{cases}$$
Note, as stated in the Lemma, $x$ is fixed, hence, once we fix $x$, the magnitude $ N_{B\left(x, h_n\right)}$ depends only on $n$.
Using this notation, let $\mathscr{A}_n(x)$ be the $A \times A$ matrix defined by 
\begin{align*}
\mathscr{A}_n(x) 
&= \frac{1}{N_{B\left(x, h_n\right)}} \cdot \tilde{\mathscr{X}}_n(x)^\top \tilde{\mathscr{X}}_n(x) \\
&= \frac{1}{N_{B\left(x, h_n\right)}} \cdot N_{B\left(x, h_n\right)}^2 \cdot \mathscr{X}_n(x)^\top \mathscr{X}_n(x) \\
&= N_{B\left(x, h_n\right)} \cdot \mathscr{X}_n(x)^\top \mathscr{X}_n(x).
\end{align*}
Therefore, each matrix element is given by
\begin{equation}
    \mathscr{A}_{n \alpha \beta}(x) =\frac{1}{N_{B\left(x, h_n\right)}} \sum_{I_{B\left(x, h_n\right)}} \theta_h(x_i-x)\frac{\left(x_i-x\right)^\alpha\left(x_i-x\right)^\beta}{h_n^{|\alpha|+|\beta|}}. 
    \label{Anab}
\end{equation}
With these notations, we obtain the terms $\mathscr{X}_n(x)$, $\mathscr{A}_n(x)$  and $\hat{b}_{n }(x)$.
Now, note that $\mathscr{A}_{n}$ represents the empirical weighted covariance matrix of the random variable (i.e., the sample points $x_1,...,x_n$) expressed in the polynomial basis.
Define the normalized weight function by
$$
 w(x_i-x)=\frac{\theta_h(x_i - x)}{1/h_n\int_{B(0, sh_n)} \theta_h(u-x) d u} = \frac{\Phi(x_i/h_n - x/h_n)}{1/h_n\int_{B(0, sh_n)} \Phi(u/h_n-x/h_n) d u},
$$
using change of variables with $\left[t = \frac{u-x}{h_n}, u - x = t h_n, du = h_n dt\right]$ we obtain, 
$$
w(x_i-x)=\frac{\Phi(t_i)}{ 1/h_n\int_{t\in B(0, s)}\Phi(t) h_n dt} = \frac{\Phi(t_i)}{\int_{t\in B(0, s)}\Phi(t) dt}.
$$
where $t_i = \frac{x_i - x}{h_n}$.
Therefore,
\begin{equation}
    w(x_i-x)=\frac{\Phi(t_i)}{\int_{t\in B(0, s)}\Phi(t) dt},
    \label{definition of w(xi-x)}
\end{equation}
so that $w(x)$ is a probabilty density function over the ball $B(0,sh)$.
Hence, the normalized sample covariance matrix is given by
$$\mathscr{A}_{n \alpha \beta}^w(x)=N_{B\left(x, h_n\right)} \sum _{I_{B\left(x, h_n\right)}} w(x_i-x)\mathscr{X}_{n i \alpha}(x) \mathscr{X}_{n i\beta}(x) .$$
Hence,
\begin{equation}
    \mathscr{A}^w_n = \mathscr{A}_n \cdot \frac{1}{\int_{t\in B(0, s)}\Phi(t) dt}.
    \label{const of Anw_An}
\end{equation}
Define the $A\times A$  matrix $\mathscr{A}^w$ by it's entries 
$$
\mathscr{A}^w_{\alpha \beta}={\int_{|z-x| \leq s}(z-x)^\alpha(z-x)^\beta w(z-x) d z}.
$$
The matrix $\mathscr{A}^w$ is symmetric and positive definite; consequently, its determinant is strictly positive, $\operatorname{det} \mathscr{A}^w>0$. Conceptually, $\mathscr{A}^w$ is a weighted covariance matrix, and the matrices $\mathscr{A}(x)$ and $\mathscr{A}^w_n(x)$ both serve as covariance matrices of the basis functions, with $\mathscr{A}^w_n(x)$ being a scaled version of $\mathscr{A}(x)$ that approximates the theoretical weighted covariance matrix $\mathscr{A}^w$. The primary difference lies in the normalization factors, $N_{B\left(x, h_n\right)}$ and $\frac{1}{\int_{t\in B(0, s)}\Phi(t - t_i) dt} $, making them equivalent up to a multiplicative constant.

By the Scalar Bernstein inequality, for every $\varepsilon>0$,
\begin{equation}
    P\left(\left|\mathscr{A}_{n a \beta}^w(x)-\mathscr{A}^w_{\alpha \beta}\right| \leq \varepsilon \Big|  \mathscr{E}_I\right)\geq 1-2 e^{-c'\cdot \varepsilon^2 \log(n) }=1-\frac{2}{n^{c'\cdot \varepsilon^2 }},
    \label{eq:bernstein inequality}
\end{equation}
where $c'$ is a positive constant. So if we denote $C^{\Phi}=\int_{t\in B(0, s)}\Phi(t) dt$, then by Equation (\ref{const of Anw_An})
\begin{equation}
  P\left(\left|\mathscr{A}_{n a \beta}(x)-C^{\Phi}\cdot\mathscr{A}^w_{\alpha \beta}\right| \leq \varepsilon \Big| \mathscr{E}_I\right)\geq 1-\frac{2}{n^{c'\cdot \varepsilon^2 }}.
    \label{convergence A to Aw}
\end{equation}
Now, by Lemma \ref{bound of weight fun} we obtain that for all $|\alpha| \leq k$, there exists a constant  $C_\alpha$ such that 

\begin{equation}
    \left|\partial^\alpha w(x_i-x)\right| \leq C_\alpha h_n^{-|\alpha|}, \quad \forall x \in \Omega.
    \label{bounded w(x)}
\end{equation}
Since $\Phi(x)$ is a compactly supported and $\mathcal{C}^k$ function, all of its derivatives up to order $k$ are continuous and bounded (see Lemma \ref{bound of weight fun}).
As only $w$ carries the $x$-dependence, the bound in \eqref{eq:bernstein inequality} applies verbatim to $\partial^\alpha \mathscr{A}_n$, whose mean is controlled by \eqref{bounded w(x)}.

Combining Equations (\ref{convergence A to Aw}) and (\ref{bounded w(x)}) together implies that 
\begin{equation}
     P(\|\partial^{\alpha} \mathscr{A}_n({x})\| - C_\alpha h_n^{-|\alpha|}\leq \varepsilon \Big|\mathscr{E} _I)\geq 1-\frac{2}{n^{c' \varepsilon^2 }}.
    \label{eq. 3.6 of Davoud}
\end{equation}
By choosing $\varepsilon$ such that
$c'\varepsilon^2=\tilde{c}$,  
we can simplify \eqref{eq. 3.6 of Davoud} to: 
\begin{equation} P\left(\|\partial^{\alpha} \mathscr{A}_n({x})\| \leq \tilde{C}_\alpha h_n^{-|\alpha|}\Big|\mathscr{E}_I\right) \geq 1 - \frac{2}{n^{\tilde{c}}}, 
\end{equation}
where $\tilde{C}_\alpha = C_\alpha + \varepsilon$ and $\tilde{c}$ is a positive constant.
\end{proof} 

Next, we bound the derivatives of the weight function used in the MLS formulation. This is necessary for controlling the growth of the entries in $\mathscr{A}_n(x)$ and related expressions.

\begin{lemma}
For a weight function $w(x_i-x)$, as defined in \eqref{definition of w(xi-x)}, we have
    $$\left|\partial^\alpha w(x_i-x)\right| \leq C_\alpha h_n^{-|\alpha|}, \quad \forall x \in \Omega.$$
    \label{bound of weight fun}
\end{lemma}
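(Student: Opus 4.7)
The proof is essentially a chain rule computation, so the plan is short and direct. The starting point is the explicit formula established in \eqref{definition of w(xi-x)}:
$$
w(x_i-x)=\frac{\Phi(t_i)}{\int_{B(0,s)}\Phi(t)\,dt}, \qquad t_i=\frac{x_i-x}{h_n}.
$$
The denominator is a pure number $C^\Phi:=\int_{B(0,s)}\Phi(t)\,dt$, independent of $x$, $x_i$, and (crucially) of $h_n$ after the change of variables already performed above the formula. It is strictly positive because $\Phi$ is nonnegative and strictly positive on $B(0,s)$. So all the $x$-dependence sits in the numerator $\Phi(t_i)$.

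First I would apply the chain rule to the numerator. Writing $t_i(x)=(x_i-x)/h_n$, each partial derivative $\partial_{x_j}$ applied to $\Phi(t_i(x))$ pulls out a Jacobian factor $-1/h_n$. Iterating, for any multi-index $\alpha$ with $|\alpha|\le k$,
$$
\partial^\alpha_x\bigl[\Phi(t_i(x))\bigr]
=\frac{(-1)^{|\alpha|}}{h_n^{|\alpha|}}\,(\partial^\alpha\Phi)(t_i(x)).
$$
Dividing by $C^\Phi$ gives
$$
\partial^\alpha w(x_i-x)=\frac{(-1)^{|\alpha|}}{C^\Phi\,h_n^{|\alpha|}}\,(\partial^\alpha\Phi)(t_i(x)).
$$

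Next I would use the standing hypothesis on the window: $\Phi\in\mathcal{C}^k(\mathbb{R}^d)$ and $\Phi$ is supported in the compact ball $\overline{B(0,s)}$. Consequently every derivative $\partial^\alpha\Phi$ with $|\alpha|\le k$ is continuous on the compact set $\overline{B(0,s)}$ and vanishes outside, hence is globally bounded. Setting $M_\alpha:=\sup_{t\in\mathbb{R}^d}|\partial^\alpha\Phi(t)|<\infty$ and $C_\alpha:=M_\alpha/C^\Phi$, we obtain
$$
\bigl|\partial^\alpha w(x_i-x)\bigr|\le \frac{M_\alpha}{C^\Phi}\,h_n^{-|\alpha|}=C_\alpha\,h_n^{-|\alpha|}
$$
uniformly in $x\in\Omega$ and $x_i\in\mathbb{R}^d$, which is the claimed bound.

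There is no real obstacle here; the only point worth being careful about is making sure the normalizing integral $C^\Phi$ is truly an $h_n$-free constant. This is exactly what the change of variables $t=(u-x)/h_n$ already performed in the definition \eqref{definition of w(xi-x)} achieves, so no further work is needed. The result holds for every $|\alpha|\le k$, which is all that is needed for the preceding Lemma \ref{lemma 3.6 davoud -bound on dA} and the subsequent analysis.
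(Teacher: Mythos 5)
Your proof is correct and follows essentially the same route as the paper's: apply the chain rule to the numerator $\Phi\bigl((x_i-x)/h_n\bigr)$ to extract the factor $h_n^{-|\alpha|}$, observe that the normalizing denominator is an $x$- and $h_n$-independent constant, and bound $\partial^\alpha\Phi$ by its supremum using compact support and $\mathcal{C}^k$ regularity. Your version is in fact slightly cleaner in tracking the sign $(-1)^{|\alpha|}$ and in using the already-normalized form of the denominator from the definition.
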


\begin{proof}
Differentiate \eqref{definition of w(xi-x)} with respect to $x$ 
$$  
\partial^\alpha w(x_i-x)=\partial^\alpha\left(\frac{\Phi(x_i/h_n - x/h_n)}{\int_{t\in B(0, s)}\Phi(x_i/h_n - u/h_n) du}\right).
$$
Since the denominator is independent of $x$, differentiation only affects the numerator. Using the chain rule,
$$
\partial^\alpha \Phi\left(\frac{x_i}{h_n}-\frac{x}{h_n}\right)=(-1)^{|\alpha|} h_n^{-|\alpha|}\left(\partial^\alpha \Phi\right)\left(\frac{x_i}{h_n}- \frac{x}{h_n}\right).
$$
Thus,
$$
\partial^\alpha w(x_i-x)=\frac{(-1)^{|\alpha|} h_n^{-|\alpha|}\left(\partial^\alpha \Phi\right)(x_i/h_n - x/h_n)}{\int_{t\in B(0, s)}\Phi(x_i/h_n - u/h_n) du}.
$$
Since $\Phi$ and its derivatives are bounded, there exists a constant $C_\alpha^{\prime}$ such that
$$
\left|\left(\partial^\alpha \Phi\right)(x_i/h_n - x/h_n)\right| \leq C_\alpha^{\prime}.
$$
Therefore,
$$
\left|\partial^\alpha w(x_i-x)\right| \leq \frac{C_\alpha^{\prime} h_n^{-|\alpha|}}{\int_{t\in B(0, s)}\Phi(x_i/h_n - u/h_n) du}.
$$
The denominator is independent of $x$ and is just a normalization factor ensuring $w(x_i-x)$ integrates to 1 . Then if we denote $C_\alpha=\frac{C'_\alpha}{\int_{t\in B(0, s)}\Phi(x_i/h_n - u/h_n) du}$ we obtain
$$
\left|\partial^\alpha w(x_i-x)\right| \leq C_\alpha h_n^{-|\alpha|}.
$$

\end{proof}

To ensure the stability of the MLS solution, we need a lower bound on the smallest eigenvalue of $\mathscr{A}_n(x)$, which guarantees invertibility with high probability.

\begin{lemma} 
Given the event $\mathscr{E}_I$ defined in \eqref{E_I event} holds, there exists a constant $C_\lambda>0$,  such that for a fixed but arbitrary evaluation point $x\in \Omega$,
$$
P\left(\lambda_{\min }\left(\mathscr{A}_n(x)\right) \geq C_\lambda \Big| \mathscr{E}_I\right)\geq 1-\frac{2A^2}{n^\phi},
$$
where $\phi$ is a real positive constant, $A$ is the dimension of the matrix $\mathscr{A}_n(x)$ and $\lambda_{\text {min }}\left(\mathscr{A}_n(x)\right)$ is the smallest eigenvalue of $\mathscr{A}_n(x)$. In particular, with the stated probability $\mathscr{A}_n(x)$ is positive definite and full rank.
    \label{lemma 3.7 davoud - lambda_min}
\end{lemma}

\begin{proof} 
Let $\pi=\left(\pi_\alpha\right)_{\alpha \in A}$ be a unit vector in $\mathbb{R}^{|A|}$ such that $\sum_{\alpha\in A} \pi_\alpha^2=1$. For $z\in\mathbb{R}^d$, we define the polynomial:
$$
P(z)=\sum_{\alpha \in A} \pi_\alpha z^\alpha
$$
Now define the empirical quadratic form:
$$
\mathcal{Q}_n(\pi):=\pi^{\top} \mathscr{A}_n(x) \pi.
$$
From \eqref{Anab} we get
$$
\mathcal{Q}_n(\pi)=\frac{1}{N_{B\left(x, h_n\right)}} \sum_{i \in I_{B\left(x, h_n\right)}} \theta_h\left(x_i-x\right) \cdot\left(\sum_\alpha \pi_\alpha \frac{\left(x_i-x\right)^\alpha}{h_n^{|\alpha|}}\right)^2 .
$$
Let us define the rescaled coordinates:
$$
t_i:=\frac{x_i-x}{h_n}, \quad \text { so that } \quad x_i=x+h_n t_i.
$$
Substituting $t_i=\frac{x_i-x}{h_n}$, and using $\theta_h\left(x_i-x\right)=\Phi\left(t_i\right)$, we get:

$$
\mathcal{Q}_n(\pi)=\frac{1}{N_{B\left(x, h_n\right)}} \sum_{i \in I_{B\left(x, h_n\right)}} \Phi\left(t_i\right) \cdot\left(\sum_\alpha \pi_\alpha t_i^\alpha\right)^2=\frac{1}{N_{B\left(x, h_n\right)}} \sum_{i \in I_{B\left(x, h_n\right)}} \Phi\left(t_i\right) \cdot P\left(t_i\right)^2,
$$
where $P(t)=\sum_\alpha \pi_\alpha t^\alpha$.
Given the event $\mathscr{E}_I$, the number of samples $\left\{t_i\right\}_{i \in I_{B\left(x, h_n\right)}}$ inside the rescaled ball $B(0, s)$ goes to infinity  as $n \rightarrow \infty$ in probability.  We now show that 
$\mathbb{E}\left[\mathcal{Q}_n(\pi)\right]$ is positive.
$$
\mathbb{E}\left(\mathcal{Q}_n(\pi)\right)=\frac{\int_{B\left(x, h_n\right)} \Phi\left(\frac{u-x}{h_n}\right) P\left(\frac{u-x}{h_n}\right)^2 p(u) d u}{\int_{B\left(x, h_n\right)} p(u) d u}.
$$
By applying the change of variables $t=\frac{u-x}{h_n}$, which means $d u=h_n^d d t$, we obtain:
$$
\mathbb{E}(\mathcal{Q}_n(\pi))=\frac{\int_{B(0, s)} \Phi(t) P(t)^2 p\left(x+h_n t\right) h_n^d d t}{\int_{B(0, s)} p\left(x+h_n t\right) h_n^d d t}.
$$
Recall now that $\int_{B(0,s)} \underline{c} d \mu \leq \int_{B(0,s)} d p \leq\int_{B(0,s)} \bar{c} d \mu$, where $p$ is a density function satisfies Definition \ref{NicelyDef}, hence,
\begin{equation}
\begin{aligned}
    \underline{\beta} \int_{B(0, s)} \Phi(t) P(t)^2 dt 
    &= \frac{\underline{c}\int_{B(0, s)} \Phi(t) P(t)^2 d t}{\bar{c}\int_{B(0, s)} d t} 
    \leq \mathbb{E}(\mathcal{Q}_n(\pi)) \\
    &\leq \frac{\bar{c}\int_{B(0, s)} \Phi(t) P(t)^2 d t}{\underline{c}\int_{B(0, s)} d t} 
    = \bar{\beta} \int_{B(0, s)} \Phi(t) P(t)^2 dt.
\end{aligned}
\end{equation}
where, $\underline{\beta}= \frac{\underline{c}}{\bar{c}\cdot \text{Vol}B(0,s)} $ and $\bar{\beta}=\frac{\bar{c}}{\underline{c}\text{Vol}B(0,s)} $. Denote,
$$
\mathcal{Q}(\pi)=\int_{B(0, s)} \Phi(t) \cdot P(t)^2 dt.
$$
Thus,
\begin{equation}
    \underline{\beta} \mathcal{Q}(\pi) \leq \mathbb{E}(\mathcal{Q}_n(\pi))\leq
\bar{\beta} \mathcal{Q}(\pi).
\label{bound of EQ_n}
\end{equation}
Since the integrand $\Phi(t) \cdot P(t)^2 $ is non-negative, and $P(t) $ is not identically zero on $ B(0,s)$, the integral $\mathcal{Q}(\pi) $ is strictly positive for all $\|\pi\|=1 $, i.e., $\mathcal{Q}(\pi)>0$.
Moreover, $\mathcal{Q}(\pi)$ is continuous in $\pi$, and is defined on a compact support, therefore,
by the Weierstrass Theorem $\mathcal{Q}(\pi)$ is bounded and attains its minimum.
Denote,
\begin{equation}
    \tau = \inf_{\|\pi\|=1} \mathcal{Q}(\pi).
    \label{pi*}
\end{equation}
In particular, for every $\pi$, $\mathcal{Q}(\pi)>0$ so we also have $\tau>0$. We fix from here on any constant
$$
C_\lambda \in \left(0,\ \underline{\beta}\,\tau\right).
$$
Consider the deterministic (conditional) covariance matrix
\begin{equation}
\mathscr{A}(x):=\mathbb{E}\!\left[\mathscr{A}_n(x)\,\middle|\,\mathscr{E}_I\right],
\end{equation}
so that for every unit vector $\pi$,
$$
\pi^\top\mathscr{A}(x)\pi=\mathbb{E}\!\left[\mathcal{Q}_n(\pi)\,\right]
\;\ge\;\underline{\beta}\,\mathcal{Q}(\pi)\;\ge\;\underline{\beta}\,\tau,
$$
where the first inequality is \eqref{bound of EQ_n}. Because $\mathscr{A}(x)$ is \emph{deterministic}, taking the infimum over $\|\pi\|=1$ is legitimate and yields
\begin{equation}
\lambda_{\min}\!\big(\mathscr{A}(x)\big)
=\inf_{\|\pi\|=1}\pi^\top\mathscr{A}(x)\pi
\;\ge\;\underline{\beta}\,\tau\;>\;C_\lambda\;>\;0 .
\label{eq:mean-pd}
\end{equation}
Similarly to Equation \eqref{eq:bernstein inequality}, by the scalar Bernstein inequality, for every $\varepsilon>0$ and every $\alpha,\beta$,
\begin{equation}
P\!\left(\big|\mathscr{A}_{n\alpha\beta}(x)-\mathscr{A}_{\alpha\beta}(x)\big|\ge\varepsilon
\Big|\mathscr{E}_I\right)\le 2e^{-c'\varepsilon^2\log n}=\frac{2}{n^{\,c'\varepsilon^2}}.
\label{eq:entry-conc}
\end{equation}
For an $A\times A$ matrix $B$, $\|B\|_{\mathrm{op}}\le\|B\|_F\le A\max_{\alpha,\beta}|B_{\alpha\beta}|$.
A union bound over the (finitely many, $n$-independent) $A^2$ entries therefore gives, from
\eqref{eq:entry-conc},
\begin{equation}
    P\!\left(\big\|\mathscr{A}_n(x)-\mathscr{A}(x)\big\|_{\mathrm{op}}\le A\varepsilon
\,\middle|\,\mathscr{E}_I\right)\ge 1-\frac{2A^2}{n^{\,c'\varepsilon^2}} .
\end{equation}
Note that $c'$ is the same constant as in \eqref{eq:bernstein inequality}: the constant is depending only on $\Phi$ and the support radius $s$, and the conditioning on $\mathscr{E}_I$ fixes the number of active samples $N_{B(\hat{x},sh_n)}=\Theta(\log n)$, so the Bernstein--Hoeffding exponent has the same dependence $c'\varepsilon^2\log n$ in both lemmas.

Choose $\varepsilon=\dfrac{\underline{\beta}\,\tau-C_\lambda}{A}$ (so $A\varepsilon=\underline{\beta}\,\tau-C_\lambda>0$) and set $\phi:=c'\varepsilon^2=\dfrac{c'\left(\underline{\beta}\,\tau-C_\lambda\right)^2}{A^2}>0$.
By Weyl's eigenvalue perturbation inequality for Hermitian matrices and \eqref{eq:mean-pd}, we obtain 
\begin{equation}
    \lambda_{\min}(\mathscr{A}_n(x))\;\ge\;\lambda_{\min}\!\big(\mathscr{A}(x)\big)
-\big\|\mathscr{A}_n(x)-\mathscr{A}(x)\big\|_{\mathrm{op}}
\;\ge\;\underline{\beta}\,\tau-\left(\underline{\beta}\,\tau-C_\lambda\right)\;=\;C_\lambda .
\end{equation}
Hence
$$
P\!\left(\lambda_{\min}(\mathscr{A}_n(x))\ge C_\lambda \Big|\mathscr{E}_I\right)
\;\ge\;1-\frac{2A^2}{n^{\phi}} ,
$$
which is the assertion (with the $n$-independent constant $2A^2$ in place of $2$). In
particular $\mathscr{A}_n(x)$ is positive definite and full rank for all sufficiently large $n$.
\end{proof}

Building upon the invertibility of $\mathscr{A}_n(x)$	and the bounds on its derivatives (established in Lemmas \ref{lemma 3.7 davoud - lambda_min} and \ref{lemma 3.6 davoud -bound on dA}, respectively), we now derive corresponding bounds for the derivatives of its inverse.
\begin{lemma}
    Given the event $\mathscr{E}_I$ defined in \eqref{E_I event} holds. Under the assumptions of Lemma \ref{lemma 3.6 davoud -bound on dA} and Lemma \ref{lemma 3.7 davoud - lambda_min}, for a fixed but arbitrary evaluation point $x \in \Omega \,\,\text{and}\,\,\, \alpha, \beta \in A$, we have:
    \begin{equation}
    P\left(\|\partial^{\alpha} \mathscr{A}_n^{-1}(x)\| \leq C_{\alpha_0} h_n^{-\left|\alpha\right|} \Big| \mathscr{E}_I\right) \geq 1-\frac{1}{n^{s_1}},
    \end{equation}
    where $C_{\alpha_0},s_1$ are positive constant.
    \label{lemma 3.10 davoud - bound a^-1}
\end{lemma}
 
\begin{proof}

Applying the multivariate Leibniz rule to the identity
$$
\mathscr{A}_n^{-1}(x) \mathscr{A}_n(x)=I .
$$
we obtain the following recursive formula for the derivatives of the inverse matrix (see, e.g., \cite{hormander1983analysis}), when we differentiating both sides with respect to a multi-index $\alpha_0$ and evaluating at a point $x \in \Omega$,

$$
\partial^{\alpha_0} \mathscr{A}_n^{-1}(x)=-\sum_{\beta<\alpha_0}\binom{\alpha_0}{\beta} \mathscr{A}_n^{-1}(x)\left(\partial^{\alpha_0-\beta} \mathscr{A}_n(x)\right)\left(\partial^\beta \mathscr{A}_n^{-1}(x)\right) .
$$
We now prove the desired result by induction on $\left|\alpha_0\right|$.
Starting with the base case: $\left(\left|\alpha_0\right|=1\right)$.
Let $\alpha_0=e_j$, the unit multi-index in the $j$ th direction. 
Then,
$$
\partial^{e_j} \mathscr{A}_n^{-1}(x)=-\mathscr{A}_n^{-1}(x)\left(\partial^{e_j} \mathscr{A}_n(x)\right) \mathscr{A}_n^{-1}(x).
$$
From Lemma \ref{lemma 3.6 davoud -bound on dA} we have, 
\begin{equation}
    P\left(\|\partial^{e_j} \mathscr{A}_n(x)\|\leq \tilde{C}_{\alpha} h_n^{-1}\Big|\mathscr{E}_I\right)\geq 1-\frac{2}{n^{\tilde{c} }},
    \label{eq:result lemma 5 in lemma 8}
\end{equation}
and from Lemma \ref{lemma 3.7 davoud - lambda_min}, we know that given the event $\mathscr{E}_I$, 

\begin{equation}
P\left(\left\|\mathscr{A}_n^{-1}(x)\right\|_2 \leq \frac{1}{C_\lambda} \Big|\mathscr{E}_I\right)\geq 1-\frac{2}{n^{\phi}}.
\label{norm 2 of A_n^-1}
\end{equation}
Using the sub multiplicative property of the matrix norm, $\|AB\|\leq\|A\|\|B\|$, and the bounds from \eqref{eq:result lemma 5 in lemma 8} and \eqref{norm 2 of A_n^-1}, we have:
$$
\left\|\partial^{e_j} \mathscr{A}_n^{-1}(x)\right\| \leq\left\|\mathscr{A}_n^{-1}(x)\right\|^2 \cdot\left\|\partial^{e_j} \mathscr{A}_n(x)\right\|.
$$
By the union bound, the probability that both $\left\|\mathscr{A}_n^{-1}(x)\right\| \leq 1 / C_\lambda$ and $\left\|\partial^{e_j} \mathscr{A}_n(x)\right\| \leq \tilde{C}_\alpha h_n^{-1}$ hold is at least $1-\frac{2}{n^{\bar{c}}}-\frac{2}{n^\phi}$. Thus, we conclude:
$$
P\left(\left\|\partial^{e_j} \mathscr{A}_n^{-1}(x)\right\| \leq C_{e_j} h_n^{-1} \mid \mathscr{E}_I\right) \geq 1-\frac{2}{n^{\tilde{c}}}-\frac{2}{n^\phi},
$$
where $C_{e_j}=\tilde{C}_\alpha / C_\lambda^2$.
This establishes the base case of the induction.
Next, the inductive step
assumes that for all multi-indices $\beta<\alpha_0$, we have for all $x \in \Omega$
\begin{equation}
    P\left(\|\partial^\beta \mathscr{A}_n^{-1}(x)\|\leq C_\beta h_n^{-|\beta|}\Big| \mathscr{E}_I\right)\geq 1-\frac{2}{n^{\tilde{c} }}-\frac{2}{n^{\phi}}.
    \label{inductive assumption}
\end{equation}
Now we prove the inductive step. From the derivative identity we obtain:
\begin{equation}
   \|\partial^{\alpha_0} \mathscr{A}_n^{-1}(x)\| \leq \sum_{\beta<\alpha_0}\binom{\alpha_0}{\beta}\left\|\mathscr{A}_n^{-1}(x)\right\|_2 \cdot\|\partial^{\alpha_0-\beta} \mathscr{A}_n(x)\| \cdot\|\partial^\beta \mathscr{A}_n^{-1}(x)\| .
    \label{derivative identity}
\end{equation}
Using the induction hypothesis \eqref{inductive assumption} together with Equations \eqref{norm 2 of A_n^-1},\eqref{derivative identity}, and the bounds from Lemma \ref{lemma 3.6 davoud -bound on dA} for $\alpha_0-\beta$, we obtain that,
$$
P\left(|\partial^{\alpha_0} \mathscr{A}_n^{-1}({x})|\leq C_{\alpha_0} h_n^{-|\alpha_0|}\Big|\mathscr{E}_I\right)\geq 1-\frac{2}{n^{\tilde{c}}}-\frac{2}{n^{\phi}},
$$
where $C_{\alpha_0}=\frac{1}{C_\lambda} \sum_{\beta<\alpha_0}\binom{\alpha_0}{\beta} \tilde{C}_{\alpha_0-\beta} C_\beta$.
This completes the inductive argument, as desired. 
By taking a positive constant  $s_1<\min\{\tilde{c},\phi\}$, the constant factor in the tail bound can be absorbed for $n>1$. Thus,
$$
P\left(|\partial^{\alpha_0} \mathscr{A}_n^{-1}({x})|\leq C_{\alpha_0} h_n^{-|\alpha_0|}\Big|\mathscr{E}_I\right)\geq 1-\frac{1}{n^{s_1}},
$$
which completes the proof. 
\end{proof}

Finally, we use the preceding bounds to control the derivatives of the shape functions $a_i^*(x)$ themselves. These bounds ensure that the MLS reconstruction remains uniformly well-conditioned across the domain $\Omega$, providing the stability required to guarantee the convergence of the approximation. 
The following Lemma provides the stochastic counterpart to Theorem 3.11 in \cite{mirzaei2015analysis}.

\begin{lemma}
Suppose the event $\mathscr{E}_I$ as defined in \eqref{E_I event} holds, and all the assumptions of Lemma \ref{bound of weight fun} are satisfied. Then, for $|\alpha|\leq k-1$ and a fixed $x\in \Omega$, the MLS shape functions $a^*_i$ defined in \eqref{explicitly a*_i} satisfy the following stability condition,
\begin{equation}
    P\left(\sum_{i \in I_{B\left(x, s h_n\right)}}\left|\partial^\alpha a_i^*(x)\right| \leq C_\alpha^{\prime} \cdot h_n^{-|\alpha|} \Big|\mathscr{E}_I\right)\geq 1-\frac{1}{n^{s_1}},
\end{equation}
where $C'_\alpha,s_1$ are positive constant.
\label{lemma 3.11 davoud-bound shape function}
\end{lemma}

\begin{proof}
Recall the stochastic MLS shape functions \eqref{explicitly a*_i} are given by
\begin{equation}
     a_i^*(x)=\theta_h(x_i-x) \sum_{|\beta| \leq k-1} \eta_\beta(x) \cdot \frac{\left(x_i-x\right)^\beta}{h_n^{|\beta|}} .
     \label{a_i^* in lemma 3.11}
\end{equation}
We start with bounding the derivatives of $\eta(x)=\dfrac{1}{N_{B(\hat{x}, s h_n)}}\,\mathscr{A}_n(x)^{-1} \boldsymbol{p}(x)$ \eqref{eta normalized}, where the count $N_{B(\hat{x}, s h_n)}$ does not depend on the differentiation variable $x$ (it is the count at the fixed center $\hat{x}$), and is therefore carried as a constant scalar under $\partial^\alpha$.
\begin{equation*}
    \partial^\alpha \eta(x)=\frac{1}{N_{B(\hat{x}, s h_n)}}\sum_{\zeta \leq \alpha}\binom{\alpha}{\zeta} \partial^{\alpha-\zeta} \mathscr{A}_n^{-1}(x) \cdot \partial^\zeta \boldsymbol{p}(x) \quad \forall \alpha \text { with }|\alpha| \leqslant k-1 .
\end{equation*}
Now, by the definition of the scaled polynomial basis \eqref{eq:polynomial basis}, we have:
$$
\partial^\zeta \boldsymbol{p}(x)=\zeta!\cdot h_n^{-|\zeta|} \cdot e_\zeta,
$$
where $e_\zeta$ is the canonical basis vector in multi-index ordering (i.e., the standard basis vector with a 1 in the position corresponding to multi-index $\zeta$, and zeros elsewhere).
By Lemma \ref{lemma 3.10 davoud - bound a^-1}, with probability of at least $1-\frac{1}{n^{s_1 }}$, we have
$$
\left|\partial^\gamma \mathscr{A}_n^{-1}(x)\right| \leq C_\gamma h_n^{-|\gamma|},
$$
where $C'_\gamma$ is a positive constant independent of $x$ and $h_n$. 
Hence, with the same probability, for a constant $C_{\alpha,\zeta}$:
\begin{equation}
    \left|\partial^\alpha \eta(x)\right| \leq \frac{1}{N_{B(\hat{x}, s h_n)}}\sum_{\zeta \leq \alpha}\binom{\alpha}{\zeta} {C_{\alpha, \zeta} h_n^{-|\alpha-\zeta|}} \cdot e_\zeta h_n^{-|\zeta|} \leq \frac{C_{\alpha}}{N_{B(\hat{x}, s h_n)}} h_n^{-|\alpha|} ,
    \label{eq:bound on eta}
\end{equation}
where the vector $C_\alpha\in\mathbb{R}^{|A|}$ is a bound for $\sum_{\zeta \leqslant \alpha}\binom{\alpha}{\zeta} C_{\alpha, \zeta} e_\zeta$.
Taking the derivatives of both sides in Equation \eqref{a_i^* in lemma 3.11} and using the product rule, we have,
\begin{equation*}
    \partial^\alpha a_i^*(x)=\sum_{\zeta \leq \alpha}\binom{\alpha}{\zeta} \partial^{\alpha-\zeta} \theta_h(x_i-x) \sum_{|\beta| \leq k-1} \partial^\zeta \left(\eta_\beta(x) \cdot \frac{\left(x_i-x\right)^\beta}{h_n^{|\beta|}}\right) .
\end{equation*}
By Lemma \ref{bound of weight fun}, the weight functions satisfy:
$$
\left|\partial^\alpha \theta_{h}(x_i-x)\right| \leq C_{\alpha,\theta} h_n^{-|\alpha|},
$$
and since  $x_i \in B\left(x, s h_n\right)$, we have $\left|\left(x_i-x\right)^\beta\right| \leq\left(s h_n\right)^{|\beta|}$. Applying the latter in addition to the previous bounds, we get that with probability of at least $1-\frac{1}{n^{s_1 }}$,
\begin{equation*}
    \left|\partial^\alpha a_i^*(x)\right|  \leqslant \sum_{\zeta \leqslant \alpha}\left\{\binom{\alpha}{\zeta} C_{\alpha, \zeta} h_n^{|\zeta|-|\alpha|} \sum_{|\beta| \leqslant k-1} \frac{C_\alpha}{N_{B(\hat{x}, s h_n)}} h_n^{-|\zeta|} h_n^{-|\beta|} h_n^{|\beta|}\right\} \leqslant \frac{C_\alpha}{N_{B(\hat{x}, s h_n)}} h_n^{-|\alpha|}.
\end{equation*}
Summing over the active indices and using that the number of nonzero terms is exactly $\#I_{B(\hat{x}, s h_n)}=N_{B(\hat{x}, s h_n)}$, the factor $N_{B(\hat{x}, s h_n)}$ cancels the $1/N_{B(\hat{x}, s h_n)}$ in the per--term bound:
$$
\sum_{i \in I_{B(\hat{x}, s h_n)}}\left|\partial^\alpha a_i^*(x)\right|
\;\leq\; N_{B(\hat{x}, s h_n)}\cdot \frac{C_\alpha}{N_{B(\hat{x}, s h_n)}}\, h_n^{-|\alpha|}
\;=\; C_\alpha\, h_n^{-|\alpha|}.
$$
Hence, on the event $\mathscr{E}_I$ (which holds with probability at least $1-\tfrac{2}{n^c}$ by Lemma \ref{convergence rate of sample size growth}), for all $x\in\Omega$,
\begin{equation}
    P\left(\sum_{i \in I_{B\left(x, s h_n\right)}}\left|\partial^\alpha a_i^*(x)\right| \leq C_\alpha^{\prime} \cdot h_n^{-|\alpha|} \Big|\mathscr{E}_I\right)\geq 1-\frac{1}{n^{s_1 }},
    \label{eq: point-wise lemma 9}
\end{equation}
 where $C'_\alpha$ is a positive constant independent of $x$ and $h_n$, as desired.
 \end{proof}

\section{Smoothness of the Approximation} \label{smoothness of the approximation}

In this section, we show that the MLS approximation $\mlsfunstoch(x)$ inherits smoothness locally from the weight function and from the regularity of the local design matrices. This is formally proven below, but first, we present the intuition behind the proof.

The structural assumptions on the point cloud ensure that, with high probability, each local neighborhood contains a sufficiently dense set of points to uniquely determine the polynomial coefficients. This guarantees that, on a sufficiently small ball of radius comparable to $h_n$, the local Gram matrix $\mathscr{G}_n(x)$ is full-rank and varies $\mathcal{C}^k$-smoothly with respect to $x$ --- unlike its normalization $\mathscr{A}_n(x)=\frac{1}{N_{B(x,h_n)}}\mathscr{G}_n(x)$, whose sample count $N_{B(x,h_n)}$ jumps with $x$. Since the weight function $\theta_h$ belongs to $\mathcal{C}^k$ and $\mathscr{G}_n(x)$ is smoothly invertible there, the shape functions $a_i^*(x)$ are likewise of class $\mathcal{C}^k$ on this local ball.

Consequently, the MLS approximation $\mlsfunstoch(x)$, expressed locally as a linear combination of the sampled values $f(x_i)$ with $\mathcal{C}^k$ shape functions, belongs to $\mathcal{C}^k$ on a ball whose radius is of order $h_n$. 

\begin{proof}[Proof of Theorem~\ref{thm:local-smoothness}]
The key is to express the shape functions through the \emph{unnormalized} Gram matrix
$\mathscr{G}_n$ of \eqref{gram system}, which---unlike the normalized matrix
$\mathscr{A}_n=\tfrac{1}{N_{B(x,h_n)}}\mathscr{G}_n$, depends smoothly on $x$. By
\eqref{eta normalized}, $\eta(x)=\mathscr{G}_n(x)^{-1}\boldsymbol{p}(x)$, hence
\begin{equation}
a_i^*(x)=\theta_h(x_i-x)\,\boldsymbol{p}_x(x_i)^\top\,\mathscr{G}_n(x)^{-1}\,\boldsymbol{p}(x),
\qquad
\boldsymbol{p}_x(x_i)=\Big(\tfrac{(x_i-x)^\gamma}{h_n^{|\gamma|}}\Big)_{|\gamma|\le k-1}.
\label{eq:shape-via-gram}
\end{equation}
The sample count $N_{B(x,h_n)}$ has cancelled and does \emph{not} appear in \eqref{eq:shape-via-gram}.
We now show the smoothness of $\mathscr{G}_n$.
By \eqref{A_nab in main proof},
$$
\mathscr{G}_{n\,ij}(x)=\sum_{t=1}^n \theta_h(x_t-x)\,
\frac{(x_t-x)^{\alpha_i}}{h_n^{|\alpha_i|}}\,\frac{(x_t-x)^{\alpha_j}}{h_n^{|\alpha_j|}},
\qquad i,j=1,\dots,A .
$$
Each summand is the product of the $\mathcal{C}^k$ weight $\theta_h(x_t-x)$ (which
vanishes to order $k$ at the boundary of its support) with polynomials, hence is
$\mathcal{C}^k$ in $x$; a sample $x_t$ entering or leaving the support $B(x,sh_n)$ does so
with $\theta_h\to 0$ smoothly. Thus $\mathscr{G}_{n\,ij}\in\mathcal{C}^k(\Omega)$. In
contrast to $\mathscr{A}_n$, the Gram matrix carries no count normalization, so it does not inherit the discontinuity of $x\mapsto N_{B(x,h_n)}$.

Now, Fix $x_0\in\Omega$ and work on the event $\mathscr{E}_I$ throughout. By
Lemma~\ref{lemma 3.7 davoud - lambda_min}, with conditional probability at least
$1-\tfrac{2A^2}{n^{\phi}}$ we have $\lambda_{\min}(\mathscr{A}_n(x_0))\ge C_\lambda$;
on this event, since $\mathscr{G}_n(x_0)=N_{B(x_0,sh_n)}\,\mathscr{A}_n(x_0)$ and, by
$\mathscr{E}_I$, $N_{B(x_0,sh_n)}\ge\gamma_1\log n$,
\begin{equation}
\lambda_{\min}(\mathscr{G}_n(x_0))\;\ge\;\gamma_1 C_\lambda\,\log n .
\label{eq:gram-lambda-min}
\end{equation}
To propagate this to a ball, we bound the Lipschitz constant of the entries of
$\mathscr{G}_n$. Any sample contributing to $\mathscr{G}_n(x)$ for some $x\in B(x_0,\delta_0)$
with $\delta_0\le sh_n$ lies within $2sh_n$ of $x_0$; 
the number of such samples is at most the count in $B(x_0,2sh_n)$, which by
Lemma~\ref{convergence rate of sample size growth} (applied with radius $2sh_n$) is at most
$\gamma_2'\log n$ for a constant $\gamma_2'$; on $\mathscr{E}_I$ this holds together with the
lower bound \eqref{eq:gram-lambda-min}.
Differentiating an entry brings one
factor $h_n^{-1}$ through $\theta_h$, so there is a constant $C_{\mathrm{Lip}}$,
\emph{independent of $n$}, with
$$
\sum_{i,j}\big|\mathscr{G}_{n\,ij}(x_0)-\mathscr{G}_{n\,ij}(x)\big|
\;\le\;C_{\mathrm{Lip}}\,d\,(\log n)\,h_n^{-1}\,\|x_0-x\| .
$$
With Weyl's inequality and the norm equivalence $\|B\|_{\mathrm{op}}\le C_{\mathrm{norm}}\|B\|_{F,1}$,
$$
\big|\lambda_{\min}(\mathscr{G}_n(x_0))-\lambda_{\min}(\mathscr{G}_n(x))\big|
\;\le\;C_{\mathrm{norm}}C_{\mathrm{Lip}}\,d\,(\log n)\,h_n^{-1}\,\|x_0-x\| .
$$
Set
$$
\delta_0:=\frac{\gamma_1 C_\lambda\,\log n}{2\,C_{\mathrm{norm}}C_{\mathrm{Lip}}\,d\,(\log n)\,h_n^{-1}}
=\frac{\gamma_1 C_\lambda}{2\,C_{\mathrm{norm}}C_{\mathrm{Lip}}\,d}\,h_n\;\asymp\;h_n .
$$
The two factors of $\log n$ cancel, leaving $\delta_0\asymp h_n$. 

For every $x\in B(x_0,\delta_0)$, the Lipschitz bound together with $\|x_0-x\|\le\delta_0$ and the definition of $\delta_0$ gives
\[
\big|\lambda_{\min}(\mathscr{G}_n(x_0))-\lambda_{\min}(\mathscr{G}_n(x))\big|
\;\le\; C_{\mathrm{norm}}C_{\mathrm{Lip}}\,d\,(\log n)\,h_n^{-1}\,\delta_0
\;=\;\tfrac12\,\gamma_1 C_\lambda\log n
\;\le\;\tfrac12\,\lambda_{\min}(\mathscr{G}_n(x_0)),
\]
where the last inequality uses \eqref{eq:gram-lambda-min}. Hence
\[
\lambda_{\min}(\mathscr{G}_n(x))\;\ge\;\lambda_{\min}(\mathscr{G}_n(x_0))-\tfrac12\,\lambda_{\min}(\mathscr{G}_n(x_0))
\;=\;\tfrac12\,\lambda_{\min}(\mathscr{G}_n(x_0))\;>\;0,
\]
so $\mathscr{G}_n(x)$ is positive definite throughout $B(x_0,\delta_0)$.

Therefore, on $B(x_0,\delta_0)$ the matrix $\mathscr{G}_n(x)$ is $\mathcal{C}^k$ and invertible; by
Cramer's rule its inverse $\mathscr{G}_n(x)^{-1}$ is $\mathcal{C}^k$ there (entries are
$\mathcal{C}^k$ cofactors divided by $\det\mathscr{G}_n(x)$, which is bounded away from $0$).
Hence $\eta(x)=\mathscr{G}_n(x)^{-1}\boldsymbol{p}(x)\in\mathcal{C}^k$, and by
\eqref{eq:shape-via-gram} with $\theta_h\in\mathcal{C}^k$, each shape function
$a_i^*(x)\in\mathcal{C}^k(B(x_0,\delta_0))$. Therefore
$$
P\!\left(\mlsfunstoch(x)\in\mathcal{C}^k\big(B(x_0,\delta_0)\big)\,\middle|\,\mathscr{E}_I\right)
\;\ge\;1-\frac{2A^2}{n^{\phi}} .
$$
\end{proof}

\begin{coro}[Local version]\label{cor:local-shapefun}
Given the event $\mathscr{E}_I$ holds, with conditional probability at least
$1 - \tfrac{2A^2}{n^{\phi}}$ (where $\phi$ is the constant from
Lemma~\ref{lemma 3.7 davoud - lambda_min}), each shape function $a^*_i(x)$ defined in
\eqref{explicitly a*_i} belongs to $\mathcal{C}^k\!\big(B(x_0,\delta_0)\big)$. Moreover, with the
same probability, for every multi-index $\alpha$ with $|\alpha| \le k$, the partial derivative
$\partial^\alpha a^*_i(x)$ is uniformly continuous on $B(x_0,\delta_0)$.
\end{coro}

\begin{proof}
The proof of Theorem~\ref{thm:local-smoothness} established that, on the event $\mathscr{E}_I$ and
with conditional probability at least $1 - \tfrac{2A^2}{n^{\phi}}$, the Gram matrix
$\mathscr{G}_n(x)$ is $\mathcal{C}^k$ and positive definite for all $x \in B(x_0,\delta_0)$,
\begin{equation}\label{eq:lambdamin-ball}
  \lambda_{\min}\big(\mathscr{G}_n(x)\big) \;\ge\; \tfrac12\,\lambda_{\min}\big(\mathscr{G}_n(x_0)\big)
  \;>\;0, \qquad \text{for all } x \in B(x_0,\delta_0),
\end{equation}
with $\det\mathscr{G}_n(x)$ bounded away from $0$. By Cramer's rule the entries of
$\mathscr{G}_n(x)^{-1}$ are then in $\mathcal{C}^k$ on $B(x_0,\delta_0)$, so the coefficients
$\eta(x) = \mathscr{G}_n(x)^{-1}\boldsymbol{p}(x)$ are in $\mathcal{C}^k$; together with the smoothness
of the weight $\theta_h$ and \eqref{eq:shape-via-gram}, each shape function $a^*_i(x)$ is in
$\mathcal{C}^k\!\big(B(x_0,\delta_0)\big)$.

For the second claim, fix a multi-index $\alpha$ with $|\alpha| \le k$. Since
$a^*_i \in \mathcal{C}^k\!\big(B(x_0,\delta_0)\big)$, the partial derivative $\partial^\alpha a^*_i$
is continuous on $B(x_0,\delta_0)$. The closed ball ${B(x_0,\delta_0)}$ is compact, and a
continuous function on a compact set is uniformly continuous; hence $\partial^\alpha a^*_i$ is
uniformly continuous on $B(x_0,\delta_0)$, with the same conditional probability
$1 - \tfrac{2A^2}{n^{\phi}}$.
\end{proof}

\begin{remark}[Globalization and its cost]\label{rem:global-cost}
The locality of Theorem~\ref{thm:local-smoothness} and
Corollary~\ref{cor:local-shapefun} stems from the radius $\delta_0 \asymp h_n$,
which in turn reflects the $h_n^{-1}$ scaling of the Lipschitz constant of the
entries of $\mathscr{A}_n$. Choosing a bandwidth of the order used by Stone
\cite{stone1982optimal} would enlarge these balls enough to cover $\Omega$ and
would yield global convergence rates in the spirit of his results. The resulting
rate, however, is constrained jointly by the Lipschitz constant and the
bandwidth, and is therefore suboptimal: a global rate is achievable, but at a
quantifiable price in approximation error.
\end{remark}
 
\begin{remark}[Why the trade-off is benign for noisy data but not here]\label{rem:noise}
This price is one that Stone's setting absorbs at no additional cost. Had Stone
employed a smooth weight function, he would plausibly retain global continuity at
the same rates, because his convergence rate is already limited by the noise in
his samples; enlarging the bandwidth would not be expected to degrade the rate
beyond the loss the noise has already imposed, which is what permits a global
statement. Our setting is different: we sample in the noise-free regime, where the
rate is governed by the fill distance. Here any bandwidth larger than the fill
distance is a pure loss, with no pre-existing noise floor to absorb it. We
therefore forgo the global statement deliberately, keeping the
fill-distance--scaled bandwidth and the sharp local result rather than trading it
for a suboptimal global one.
\end{remark}
 
\section{Conclusion}

In this work, we have provided a rigorous stochastic foundation for the Moving
Least Squares (MLS) method, bridging the gap between its deterministic origins in
numerical analysis and its statistical counterparts in Local Polynomial
Regression. By quantifying the asymptotic behavior of the fill distance $h_n$ and
the separation $\delta_n$ for i.i.d.\ samples, we demonstrated that the
quasi-uniformity assumption---a cornerstone of classical MLS theory---fails to
hold in the stochastic regime.

Despite the failure of these deterministic assumptions, we established that the
fundamental strengths of MLS are preserved under random sampling. We proved that
for a $k$-times smooth function, the error in estimating differential operators of
order $m$ decays at the rate of $h_n^{\,k-|m|}$ with
high probability. This result reconciles the approximation power of MLS with the
minimax optimal rates found in nonparametric statistics. Furthermore, we showed
that the resulting approximant locally inherits the $\mathcal{C}^k$ smoothness of the weight
function---on balls whose radius scales with the bandwidth, with high
probability---ensuring its utility in applications requiring high-order regularity
even when the data is sampled stochastically; the precise sense in which this
smoothness is local, and why a global statement would cost approximation accuracy,
is discussed in Remarks~\ref{rem:global-cost}--\ref{rem:noise}.

In a forthcoming companion study, we extend these results to the setting of
Stochastic Manifold Moving Least Squares (MMLS) for data sampled from unknown
Riemannian manifolds. The manifold framework introduces additional geometric and
analytical considerations---in particular, controlling the interplay between the
intrinsic sampling geometry and the local Euclidean approximation used by
MLS---that warrant a dedicated treatment, and is therefore developed in a separate
work.
\section*{Acknowledgments}

This work was made possible through the support of the Ariane de Rothschild Women Doctoral Program for outstanding female PhD students.

\section*{Data Availability Statement}
Data sharing is not applicable to this article, as no datasets were generated
or analysed during the current study. The work is purely theoretical and mathematical.

\appendix
\renewcommand{\thesection}{Appendix \Alph{section}:}
\section{Probability Notations}
\begin{definition}
    The notation 
    \begin{equation*}
        X_n=O_p\left(a_n\right) \text { as } n \rightarrow \infty
    \end{equation*}
        means that $\frac{X_n}{a_n}$ is stochastically  bounded. That is, for any ${\varepsilon_x} > 0$, there exists a finite $M > 0$ and a finite $N_x > 0$ such that
        \begin{equation*}
         P\left(\left|\frac{X_n}{a_n}\right|>M\right)<{\varepsilon_x},\,\, \forall n>N_x.
        \end{equation*}
        \label{def_of_O_p}
    \end{definition}

\begin{definition}
    The notation 
    \begin{equation*}
        Y_n=\Omega_p\left(b_n\right) \text { as } n \rightarrow \infty
    \end{equation*}
        means that $\frac{Y_n}{b_n}$ is stochastically  bounded. That is, for any ${\varepsilon_y} > 0$, there exists a finite $m > 0$ and a finite $N_y > 0$ such that
        \begin{equation*}
         P\left(\left|\frac{Y_n}{b_n}\right|<m\right)<{\varepsilon_y},\,\, \forall n>N_y.
        \end{equation*}
        \label{def_of_Omega_p}
    \end{definition} 

\section{Failure of the deterministic proof in stochastic setup} \label{Failure of the deterministic - Wendland}

In Wendland's work \cite{wendland2005scattered} (page 42), there is a review of Levin's work, which proves that  $\|\mlsfun{x}-f\|_{\Omega, \infty}<L \cdot h^{k} $. The technique used there involves bounding the error by controlling the shape functions through certain manipulations. In order to achieve this, it is necessary to bound the number of samples in \( I(x) \) by a constant. This is done by comparing the volume of the union of the small balls with the volume of the large ball, leading to the following bound:

\begin{equation}
    \# I(x) \operatorname{vol}(B(0,1)) \delta_n^d \leq \operatorname{vol}(B(0,1))(\delta_n+h_n)^d
\end{equation}
Using the quasi-uniformity property (which holds in the deterministic setup) finally leads to
\begin{equation}
    \# I(x) \leq\left(1+\frac{h_n}{\delta_n}\right)^d \leq\left(1+c_{qu}\right)^d.
\end{equation}
This implies that the number of indices in \( I(x) \) remains bounded as \( n \) tends to infinity, which contradicts Lemma  \ref{convergence rate of sample size growth}, as is the case in stochastic settings. Mirzaei used this result in his paper to prove his theory. He relies on it in several of his lemmas, and therefore his Theorem 3.12—which is the main result we address in our paper (Theorem \ref{main_result})—cannot be reproduced in the stochastic setting. We re-proved the claims that depend on this result without relying on the quasi-uniformity property, and together with the remaining claims that do not depend on it, we constructed the entire proof.

\section{Detailed computation of the weighted covariance matrix in proof of Theorem \ref{main_result}} \label{Normal Equations Appendix}

Recall our basis is defined as:
\[
p_{\alpha}= 
\left\{ \left(\frac{x - z}{h_n}\right)^\alpha\right\}_{\alpha\in A},
\]
Using this basis, we construct the normal equations accordingly. The design matrix takes the form
\[
\tilde{\mathscr{X}} = \left(
\begin{array}{cccc}
\sqrt{\theta_1} p_{\alpha_0}(X_1) & \sqrt{\theta_1} p_{\alpha_1}(X_1) & \cdots & \sqrt{\theta_1} p_{\alpha_{|A|}}(X_1) \\
\vdots & & & \vdots \\
\sqrt{\theta_n} p_{\alpha_0}(X_n) & \sqrt{\theta_n} p_{\alpha_1}(X_n) & \cdots & \sqrt{\theta_n} p_{\alpha_{|A|}}(X_n)
\end{array}
\right),
\]
where $\theta_i=\theta_h(xi-x)$.
From this, the normal equations are obtained as
\[
C \cdot \tilde{\mathscr{X}}^\top \tilde{\mathscr{X}} \beta = C \cdot  \tilde{\mathscr{X}}^\top F,
\]
which leads to the solution
\[
\beta = (\tilde{\mathscr{X}}^\top \tilde{\mathscr{X}})^{-1} \tilde{\mathscr{X}}^\top F.
\]
To explicitly compute $\tilde{\mathscr{X}}^{\top} \tilde{\mathscr{X}}$, we expand the expression: 
\begin{align*}
\tilde{\mathscr{X}}^\top \tilde{\mathscr{X}}  = 
\left(
\begin{array}{ccc}
\sqrt{\theta_1} p_{\alpha_0}(X_1) & \cdots & \sqrt{\theta_n} p_{\alpha_{0}}(X_n) \\
\vdots &  & \vdots \\
\sqrt{\theta_1} p_{\alpha_{|A|}}(X_1) & \cdots & \sqrt{\theta_n} p_{\alpha_{|A|}}(X_n) 
\end{array}
\right)\cdot 
\left(
\begin{array}{ccc}
\sqrt{\theta_1} p_{\alpha_0}(X_1)  & \cdots & \sqrt{\theta_1} p_{\alpha_{|A|}}(X_1) \\
\vdots & & \vdots \\
\sqrt{\theta_n} p_{\alpha_0}(X_n)  & \cdots & \sqrt{\theta_n} p_{\alpha_{|A|}}(X_n) 
\end{array}
\right)
\end{align*}
\[
=
\left(
\begin{array}{ccc}
\sum_{i}{\theta_i} p_{\alpha_0}(X_i) \cdot p_{\alpha_0}(X_i)  & \cdots & \sum_i {\theta_i} p_{\alpha_0}(X_i) \cdot p_{\alpha_{|A|}}(X_i) \\
\vdots & & \vdots \\
\sum_i {\theta_i} p_{\alpha_{|A|}}(X_i) \cdot p_{\alpha_0}(X_i)  & \cdots & \sum_i {\theta_i} p_{\alpha_{|A|}}(X_i) p_{|A|}(X_i) 
\end{array}
\right).
\]
Next, we introduce the normalization by defining $N=N_{B\left(x, h_n\right)} $ and rescaling the design matrix: 
\[
\mathscr{X} = \frac{1}{N} \tilde{\mathscr{X}},
\]
Using this notation, we express $A$ as
\[
A = \frac{1}{N} \cdot \tilde{\mathscr{X}}^\top \tilde{\mathscr{X}} = \frac{1}{N} \cdot N^2 \cdot \mathscr{X}^\top \mathscr{X}  = N \cdot \mathscr{X}^\top \mathscr{X}.
\]
With these definitions, we recover the terms $\mathscr{X}_n(x)$ and $\mathscr{A}_n(x)$  introduced in section 3. 
Finally, regarding the term $Q \mlsfunstoch$, we note that it is a scalar quantity. To see this, we examine its components:

\begin{enumerate}
    \item $\mathscr{Q}_n^{\top}$ :
 $$
\mathscr{Q}_n^{\top}=\left[\frac{\alpha_{1}!q_{\alpha_1}}{h_n^{\left[\alpha_1\right]}}, \frac{\alpha_{2}!q_{\alpha_2}}{h_n^{\left[\alpha_2\right]}}, \ldots, \frac{\alpha_{\#(\mathscr{N})}!q_{\alpha_{\#(\mathscr{N})}}}{h_n^{\left[\alpha_{\#(\mathscr{N})}\right]}}\right].
$$
   \item $\mathscr{A}_n^{-1}(x)$ :
General elements of $\mathscr{A}_n^{-1}(x)$ are denoted by $\mathscr{A}_{n\alpha \beta}^{-1}(x)$, have the form of $\tilde{\mathscr{X}}^\top \tilde{\mathscr{X}}$ up to multiplication by a general constant.\\
\item $\mathscr{X}_n^{\top}(x)$ :
The transpose of $\mathscr{X}_n(x)$, where:
$$
\mathscr{X}_{n i \alpha}(x)= \begin{cases}\frac{\sqrt{\theta_h(x_i-x)}}{N_{B\left(x, h_n\right)}} \frac{\left(x_i-x\right)^\alpha}{h_n^{|a|}}, & i \in I_{B\left(x, h_n\right)} \text { and } \alpha \in \mathscr{N}, \\ 0, &  i \notin I_{B\left(x, h_n\right)} \text { and } \alpha \in \mathscr{N}.\end{cases}
$$
\end{enumerate}
By multiplying these three terms, we obtain 
\begin{equation}
 \sum_{\alpha \in A} \sum_{\beta \in A} \sum_{ i \in I_{B\left(x, h_n\right)}} \frac{1}{ N_{B\left(x, h_n\right)}}\cdot\left[\mathscr{A}_n^{-1}(x)\right]_{\alpha \beta}\cdot \frac{\left(x_i-x\right)^\beta}{h_n^{|\beta|}} \cdot\frac{\alpha!q_\alpha}{h_n^{|\alpha|}}\cdot\left(F_n(x)\right).
\end{equation}

\bibliographystyle{abbrv}
\bibliography{biblo}

\end{document}